\pgfplotsset{compat=1.15}
\newenvironment{nouppercase}{%
	\renewcommand{\uppercasenonmath}[1]{}}{}
\theoremstyle{plain}
\newtheorem{theorem}{Theorem}[section]
\newtheorem{proposition}[theorem]{Proposition}
\newtheorem{corollary}[theorem]{Corollary}
\theoremstyle{definition}
\newtheorem{definition}[theorem]{Definition}
\newtheorem{remark}[theorem]{Remark}
\newtheorem{example}[theorem]{Example}
\newcommand{\rleft}{\mathopen{}\mathclose\bgroup\left}
\newcommand{\rright}{\aftergroup\egroup\right}
\newcommand{\R}{{\mathbb{R}}}
\newcommand{\Z}{{\mathbb{Z}}}
\DeclareMathOperator{\conv}{conv}
\newcommand{\Vol}{{\mathrm{Vol}}}
\begin{document}
\title{Faces of Cosmological Polytopes}
\author[Lukas K\"uhne]{Lukas K\"uhne}
\address{Fakult\"at f\"ur Mathematik, Universit\"at Bielefeld, Bielefeld, Germany}
\email{lukas.kuehne@math.uni-bielefeld.de}
\author{Leonid Monin}
\address{Institute of Mathematics, EPFL, Lausanne, Switzerland}
\email{leonid.monin@epfl.ch}
\date{}
	\begin{nouppercase}
		\maketitle
	\end{nouppercase}

\begin{abstract}
A cosmological polytope is a lattice polytope introduced by Arkani-Hamed, Benincasa, and Postnikov in their study of the wavefunction of the universe in a class of cosmological models.
More concretely, they construct a cosmological polytope for any Feynman diagram, i.e. an undirected graph.
In this paper, we initiate a combinatorial study of these polytopes.
We give a complete description of their faces, identify minimal faces that are not simplices and compute the number of faces in specific instances. In particular, we give a recursive description of the $f$-vector of cosmological polytopes of trees.
\end{abstract}

\section{Introduction}
Arkani-Hamed, Benincasa, and Postnikov defined a cosmological polytope $P_G$ for every undirected graph $G=(V,E)$, that is $V=\{v_1,\dots,v_k\}$ is a finite set of \emph{vertices} and $E=\{e_1,\dots,e_n\}$ a finite set of \emph{edges} with $e_i=\{v_{j_1},v_{j_2}\}$ for some $1\le j_1,j_2\le k$.
Throughout this article we work in the space $\R^{|V|+|E|}$ with standard basis vectors $\mathbf{x}_{v_i}$, $\mathbf{y}_{e_j}$  for $1\le i\le k$ and $1\le j\le n$. 
\begin{definition}[\cite{arkani2017cosmological}]\label{def:cosm}
The cosmological polytope $P_G$ associated with a graph $G=(V,E)$ is the convex hull of the following $3|E|+|V|$ vertices
\[
P_G = \conv\left( \bigcup_{e=\{v,w\}\in E} \{\mathbf{y}_{e}+\mathbf{x}_v-\mathbf{x}_w, \mathbf{y}_{e}-\mathbf{x}_v+\mathbf{x}_w, -\mathbf{y}_{e}+\mathbf{x}_v+\mathbf{x}_w\}\cup \bigcup_{v\in V}\mathbf{x}_v\right).
\]
For an edge $e=\{v,w\}$ we will denote the above points in $\R^{n+k}$ by 
\begin{align*}
p_e=-\mathbf{y}_{e}+\mathbf{x}_v+\mathbf{x}_w,\quad
p_{e,w}=\mathbf{y}_{e}-\mathbf{x}_v+\mathbf{x}_w,\quad 
p_{e,v}=\mathbf{y}_{e}+\mathbf{x}_v-\mathbf{x}_w.
\end{align*}
\end{definition}

\begin{remark}
Definition~\ref{def:cosm} is slightly different from the standard definition of cosmological polytopes in \cite{arkani2017cosmological}, where a cosmological polytope is defined as a convex hull of vertices $p_e, p_{e,v}, p_{e,w}$ only. The two definitions coincide in the case of  graphs with no isolated vertices since $\mathbf{x}_v\in \conv(p_e, p_{e,v}, p_{e,w})$ for any edge $e=\{v,w\}$ of~$G$. However, Definition~\ref{def:cosm} works better for  graphs with isolated vertices, which might appear in the recursion in Section~\ref{sec:trees}.
\end{remark}

\begin{example}
The cosmological polytope of a graph consisting of two parallel edges $e,e'$ between two vertices $v,w$ is a prism over a triangle which is depicted in Figure~\ref{fig:polytope}.
\end{example}

% \begin{figure}[H]
%  \begin{subfigure}{.2\textwidth}
%    \centering
%    \includestandalone[width=\linewidth]{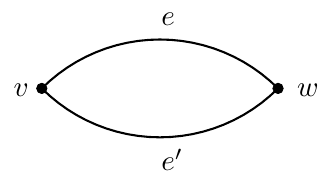}
%  \end{subfigure}%
%  \begin{subfigure}
%   \centering
%   \includestandalone[width=.3\linewidth]{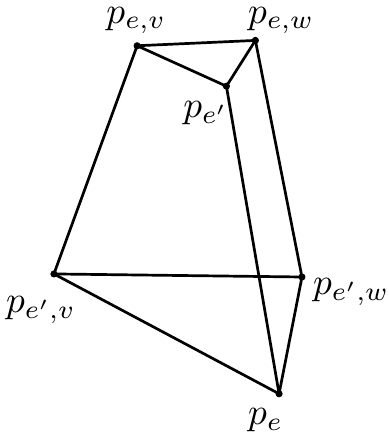}
%   \end{subfigure}%
% \caption{The cosmological polytope of a pair of two parallel edges.}
%   \label{fig:polytope}
% \end{figure}

\begin{figure}[H]
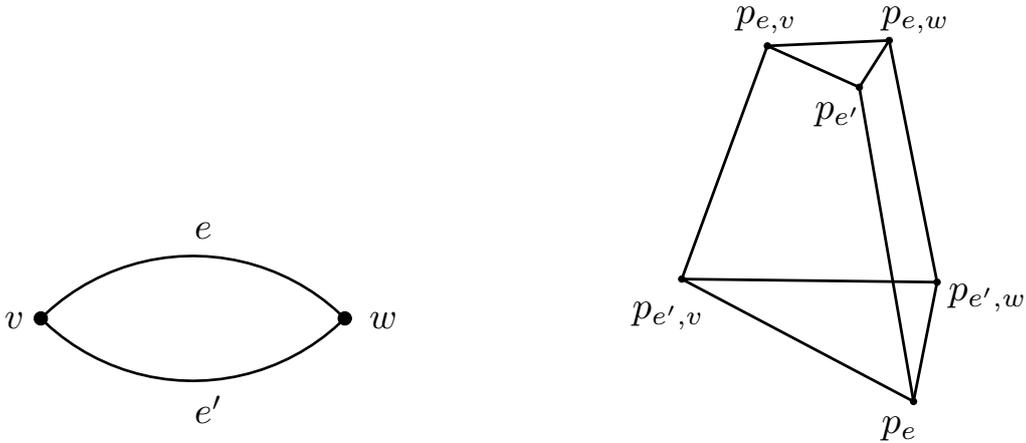

		\begin{subfigure}[b]{0.48\linewidth}
			\centering
            \includegraphics[scale=1]{G22}
			\label{fig:ex_general}
		\end{subfigure}
		% \begin{subfigure}[b]{0.2\linewidth}
		% 	\centering
		% 	%\includegraphics[scale=.45]{pics/ex1}
		% 	\label{fig:ex1}
		% \end{subfigure}
	\begin{subfigure}[b]{0.48\linewidth}
		\centering
        \includegraphics[scale=.8]{prism}
		\label{fig:rex3}
	\end{subfigure}
		\caption{The graph on the left corresponds to the cosmological polytope on the right.}
		\label{fig:polytope}
	\end{figure}

\subsection{Physical perspective}
In recent years a connection between the physics of scattering amplitudes and a class of mathematical objects called \emph{positive geometries} was discovered \cite{arkani2017positive}.
Positive geometries can be thought of as a vast generalization of convex polytopes, they encompass objects such as polytopes, the positive Grassmannian, and tree and loop Amplituhedra \cite{arkani2014amplituhedron,bai2016amplituhedron}.
The connection of positive geometries to physics is usually via a top-dimensional form uniquely determined by the condition that it has logarithmic singularities (only) along all boundary components of a positive geometry. Thus computing the canonical form is the central goal in the studies of positive geometries.

There are two standard ways to compute the canonical form of a positive geometry. The first method is to find a subdivision of a positive geometry $X$ into simpler positive geometries $Y_1,\ldots Y_k$. In this case, the canonical form ${\bf\Omega}_X$ of $X$ is given as the sum
\[
{\bf\Omega}_X = {\bf\Omega}_{Y_1} + \ldots + {\bf\Omega}_{Y_k}.
\]
This strategy provides a direct way to get an expression for the canonical form and was successfully applied in a number of situations \cite{BT22,tr3,tr1,lukowski2020positive,tr2,tr4}.

The disadvantage of this method is that one does not obtain a closed formula for the canonical form. Thus it is sometimes more convenient to compute the canonical form directly from its definition. More concretely, let $D_1,\ldots,D_r$ be the boundary components of a positive geometry $X$ defined by polynomials $f_1,\ldots,f_r$ respectively. Then the condition on the singularities of the canonical form guarantees that it can be written as
\[
{\bf\Omega}_X =\frac{g}{f_1\cdot \ldots \cdot f_r} \omega,
\]
where $g$ is some polynomial and $\omega$ is a regular form on $X$. Thus the problem of computing ${\bf\Omega}_X$ boils down to the computation of the numerator polynomial $g$. Moreover, the polynomial $g$ is determined by the condition that it should cancel on the poles of $1/(f_1\cdot \ldots \cdot f_r)$ outside of $X$, i.e. $g$ should vanish along the intersections of the $D_i$'s outside of $X$ \cite{arkani2015positive}. This observation leads to explicit formulas for the canonical form. One particular example is \cite{kohn2021adjoints} where the numerator of the canonical form of a plane positive geometry was identified as the adjoint curve to the boundary.

In \cite{arkani2017cosmological}, it was noticed that the connection between physics and positive geometries extends further to cosmology. More concretely, the cosmological polytope is constructed in \cite{arkani2017cosmological} as the positive geometric counterpart to the physics of cosmological time evolution and the wavefunction of the universe. This motivates the study of subdivisions and the face structure of cosmological polytopes as their facets are the components $D_1,\dots,D_r$ that are relevant for the computation of $\mathbf{\Omega}_X$ as discussed above.

\subsection{Combinatorial perspective}

There are several constructions of polytopes arising from graphs.
The most relevant to cosmological polytopes are \emph{symmetric edge polytopes} which recently gained considerable attention~\cite{symmetric_edge3,symmetric_edge2,symmetric_edge1}.
In particular, the symmetric edge polytope is the image of a linear projection of a facet of the cosmological polytope (the scattering facet) of the same graph. Moreover, this projection sends the vertices of the scattering facet to the vertices of the symmetric edge polytope. Thus the information on the faces of cosmological polytopes can be used to study coherent subdivisions of symmetric edge polytopes.

\subsection{Our contribution}

In this paper, we start a comprehensive study of the faces of cosmological polytopes.
Concretely, we give a criterion for a subset of vertices of $P_G$ to form a face in Section~\ref{sec:gen_faces}.
This criterion can be checked easily by considering basic properties of the graph $G$.
As explained above, knowledge of the faces of~$P_G$ is relevant to determine both the numerator and denumerator of the canonical form of the polytope.

Subsequently, in Section~\ref{sec:special_faces}, we describe two special families of faces of $P_G$, corresponding to the vertices and cycles of $G$, respectively.
Our general face criterion yields that the faces of these two families are exactly the minimal faces in $P_G$ that are not simplices.

For the special case of a tree $T$, we present a recursive way to compute the $f$-vector of the cosmological polytope $P_T$ via the $f$-vectors of smaller trees in Section~\ref{sec:trees}.
Such a recursive relation is based on the geometric realization of the cosmological polytope $P_{G'}$ as a pyramid over a bipyramid over a cosmological polytope $P_G$ if $G'$ obtained from $G$ by adding a leaf.
As a byproduct this yields that the normalized volume of the cosmological polytope of any tree with $e$ edges is $4^e$.
This geometric construction also lies behind the recursive formulae for the wavefunction of the universe obtained in \cite{arkani2017cosmological} via the frequency representation of the propagators.
For example, for a path graph $\Pi_n$ on $n$ nodes, we obtain the following recursion for the $f$-polynomial $f_{\Pi_n}(t)$ of the polytope $P_{\Pi_n}$
\[
    f_{\Pi_{n+2}}(t)=(1+t)((1+2t)f_{\Pi_{n+1}}(t) - t^2(1+t)f_{\Pi_{n}}(t)).
\]

We close in Section~\ref{sec:counting} by applying our methods to counting specific classes of faces of cosmological polytopes.
Theorem~\ref{thm:counting_low_dim} gives exact formulae for the number of $1$- and $2$-dimensional faces of cosmological polytopes.
Subsequently, we count simplex faces of cosmological polytopes of graphs with one cycle in Section~\ref{sec:counting_simplex}.
For example, the total number of simplex faces of the cycle graph on $n$ nodes is
$5^n-2^{n+1}$.

\subsection{Acknowledgments}
We would like to thank Bernd Sturmfels for introducing us to the topic of cosmological polytopes.
Furthermore, we are very grateful to Paolo Benincasa for numerous insightful discussions about these polytopes and their physical background.
Moreover, we thank Martina Juhnke and Justus Bruckamp for drawing our attention to a mistaken recitation of Theorem~\ref{thm:facets} in a previous version of this article.
Last but not least, we would like to thank the anonymous referees for their careful reading which indeed helped to improve the exposition.

The first author is supported by the Deutsche Forschungsgemeinschaft (DFG, German Research Foundation) -- SFB-TRR 358/1 2023 -- 491392403.

\section{Preliminaries}
In this section, we recall standard definitions and previous results on cosmological polytopes.
We refer to~\cite{Ziegler} for an in-depth introduction to polytopes and to \cite{arkani2017cosmological,Ben22} for a detailed introduction to cosmological polytopes.

A \emph{polytope} $P\subseteq \R^d$ is the convex hull of finitely many points in $\R^d$.
A \emph{face} $F\subseteq P$ is the set of points in the polytope $P$ that maximizes a linear functional $\phi:\R^d\to \R$.
The dimension of a face is the dimension of the affine space spanned by its points.
Faces of dimension $\dim(P)-1$ are called \emph{facets}.
Each polytope has finitely many faces and their numbers are counted in the \emph{$f$-vector} $f(P)$ of $P$ which is
$f(P)=(f_{-1},f_0,\dots,f_{\dim(P)})$ where $f_i$ is the number $i$-dimensional faces of $P$ and we set $f_{-1}=f_{\dim(P)}=1$.

Our analysis of the facial structure of cosmological polytopes relies on the following characterization of the facets of a cosmological polytope proved by Arkani-Hamed, Benincasa and Postnikov.

\begin{theorem}\label{thm:facets}[\cite{arkani2017cosmological}]
Facets of $P_G$ are in bijection with connected subgraphs $H=(V_H,E_H)$ of $G$. Under this bijection, a subgraph $H$ corresponds to the facet $F_H$ with all vertices of $P_G$ except
\begin{itemize}
    \item $p_e$ for an edge $e\in E_H$; 
    \item $p_{e,v}$ for an edge $e=\{v,w\}$ of $G$ with $v\in V_H$ and $e\notin E_H$.
\end{itemize}
So in particular, if an edge $e=\{v,w\}$ is not in $E_H$ but both $v$ and $w$ are in $V_H$ both vertices $p_{e,v}$ and $p_{e,w}$ are not part of the facet $F_H$.

Moreover, the facet $F_H$ is the intersection of $P_G$ with the following hyperplane
\[
    \sum_{v\in V_H} x_v + \sum_{\substack{e=\{v,w\},\\v\in V_H, w\notin V_H}}y_e+\sum_{\substack{e=\{v,w\}\notin E_H,\\v\in V_H, w\in V_H}}2y_e=0.
\]
\end{theorem}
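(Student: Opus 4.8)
The plan is to produce, for each connected subgraph $H=(V_H,E_H)$, an explicit supporting hyperplane whose zero set on $P_G$ is exactly $F_H$, then to check that $F_H$ has codimension one, and finally to show that every facet arises in this way; this last step is the real content, while the verification of the hyperplane is essentially bookkeeping.

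First I would verify that the displayed hyperplane is supporting. Write $\phi_H$ for the linear functional $\phi_H(\mathbf{x},\mathbf{y})=\sum_{v\in V_H}x_v+\sum_{e=\{v,w\},\,v\in V_H,\,w\notin V_H}y_e$ and evaluate it on the three vertices $p_e,p_{e,v},p_{e,w}$ of each edge $e=\{v,w\}$, separating the cases that $e$ has both, exactly one, or no endpoint in $V_H$. A short computation gives $\phi_H\ge 0$ on every vertex of $P_G$, with $\phi_H=0$ precisely on the vertices that are \emph{not} listed in the two bullet points, i.e.\ exactly on $F_H$. The one delicate point is an edge $e$ with both endpoints in $V_H$: there $\phi_H(p_e)=2$, so for the equality set to match $F_H$ such an edge must contribute $e\in E_H$ (so that $p_e$ is removed). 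Keeping this internal-versus-boundary bookkeeping consistent is the subtlety of the verification, and it is exactly the point that dictates which subgraphs may occur.

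Second, I would pin down dimensions. Every vertex of $P_G$ lies on $\{L=1\}$ for $L=\sum_{v}x_v+\sum_{e}y_e$, so $\dim P_G\le |V|+|E|-1$; for connected $G$ I would prove equality by checking that the edge-difference vectors $p_{e,v}-p_e=2\mathbf{y}_e-2\mathbf{x}_w$ and $p_{e,w}-p_e=2\mathbf{y}_e-2\mathbf{x}_v$, ranging over all edges, together with the connectivity of $G$, span the full $(|V|+|E|-1)$-dimensional direction space of $\{L=1\}$. The same style of spanning argument, now carried out among the vertices on $\{\phi_H=0\}$, shows $\dim F_H=\dim P_G-1$, so that $F_H$ is genuinely a facet and not a lower-dimensional face. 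Injectivity of $H\mapsto F_H$ then reduces to checking that distinct admissible subgraphs produce distinct removed-vertex sets, which is read off directly from the two bullet points.

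The main obstacle is surjectivity: showing that these are \emph{all} the facets. Here I would take an arbitrary facet $F$ with inner normal $\psi$, well defined modulo multiples of $L$, and analyse its restriction to each triangle $\{p_e,p_{e,v},p_{e,w}\}$. The vertices of such a triangle lying on $F$ form a face of the triangle, which forces, edge by edge, one of only a few local patterns; each pattern is labelled as ``internal'' (center removed, both leaning vertices kept), ``boundary'' (one leaning vertex removed), or ``external,'' and I would read off a candidate pair $(V_H,E_H)$ from these labels. The crux is then to show that $H$ must be connected and that the local patterns glue into a single admissible subgraph: if $H$ split into two pieces, $\psi$ would decompose as a sum of two valid functionals and $F$ would be an intersection of two faces, hence of codimension at least two, contradicting that $F$ is a facet. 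Assembling the local data into one connected subgraph, and ruling out any pattern incompatible with codimension one, is the step I expect to demand the most care; by contrast the supporting-hyperplane computation of the first paragraph is routine.
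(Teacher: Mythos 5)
The paper never proves this statement---it is quoted from \cite{arkani2017cosmological} as background---so your attempt can only be judged on its own merits, and there it has a genuine gap, located exactly at the point you yourself flag as ``delicate.'' When you observe that an edge $e=\{v,w\}$ with both endpoints in $V_H$ satisfies $\phi_H(p_e)=2$, you resolve the tension by decreeing that every such edge must lie in $E_H$, i.e.\ you silently restrict the bijection to \emph{induced} connected subgraphs. That makes your supporting-hyperplane computation consistent, but it makes surjectivity false, and your three-pattern taxonomy in the last paragraph (``internal'': $p_e$ removed, both leaning vertices kept; ``boundary'': one leaning vertex removed; ``external'': nothing removed) inherits the same omission: it is missing the fourth local pattern in which \emph{both} leaning vertices $p_{e,v},p_{e,w}$ are removed while $p_e$ is kept. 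This pattern genuinely occurs, already in the paper's own running example. For the graph with two parallel edges $e,e'$ between $v,w$, the polytope $P_G$ is a triangular prism with five facets, and its two triangular facets are $\{p_e,p_{e',v},p_{e',w}\}$ and $\{p_{e'},p_{e,v},p_{e,w}\}$. The first removes $\{p_{e,v},p_{e,w},p_{e'}\}$, a removal set that matches no subgraph under the bullet rules as you apply them: removing $p_{e,v}$ would force $v\in V_H$, $w\notin V_H$, while removing $p_{e,w}$ forces the opposite. Its supporting functional is $x_v+x_w+2y_e$ (vanishing on those three vertices, equal to $2$ on the other three), which is not of the displayed form for any $V_H$. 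So your surjectivity argument will encounter facets to which none of your three labels applies, and the proof cannot be completed as planned; with induced subgraphs only, you would also get the wrong facet counts (three instead of five for the prism; seven instead of ten for the triangle graph $C_3$, where the facet $\{p_{e_1,v_1},p_{e_1,v_2},p_{e_2,v_2},p_{e_2,v_3},p_{e_3}\}$ corresponds to the non-induced path subgraph).

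The underlying issue is that the theorem as transcribed here is itself imprecise on exactly this point, and your attempt ``repairs'' the bookkeeping in the wrong direction. The correct statement, which any successful proof along your lines must target, allows \emph{arbitrary} connected subgraphs $H=(V_H,E_H)$, replaces the second bullet by ``$p_{e,v}$ for an edge $e=\{v,w\}\notin E_H$ with $v\in V_H$'' (no condition on $w$), and uses the supporting functional
\[
\sum_{v\in V_H}x_v \;+\; \sum_{e\notin E_H}\rleft|e\cap V_H\rright|\,y_e ,
\]
where $\rleft|e\cap V_H\rright|$ counts the endpoints of $e$ in $V_H$; an edge spanned by $V_H$ but absent from $E_H$ thus enters with coefficient $2$, which is precisely what produces the missing fourth pattern. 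With that correction, your overall architecture---verify the supporting hyperplane vertex by vertex, establish codimension one by a spanning argument that uses connectivity of $H$, then classify the local pattern of an arbitrary facet on each triangle $\{p_e,p_{e,v},p_{e,w}\}$ and glue, using a decomposition argument to force connectedness---is a sound route. But as written, the ``internal-versus-boundary bookkeeping'' you dismiss as routine is exactly where the argument breaks.
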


The facet $F_G$ associated with the entire graph is called the \emph{scattering facet}.

\section{Face structure of cosmological polytopes}

\subsection{General faces}\label{sec:gen_faces}
We start by giving a criterion that characterizes the faces of cosmological polytopes.

\begin{theorem}\label{thm:faces}
Let $G=(V,E)$ be an undirected graph.
A set of vertices $X\subseteq V(P_G)$ defines a face of $P_G$ if and only if $X$ satisfies both of the following two conditions.
\begin{enumerate}
\item[(i)] If for a node $v\in V$ the set $X$ contains the vertices $p_e$ and $p_{e,v}$ for an edge $e=\{v,w\}\in E$ then $X$ contains the vertices $p_{e'}$ and $p_{e',v}$ for all edges $e'=\{v,w'\}\in E$.
\item[(ii)] If $X$ contains a subset $\{p_{e_1,v_1},p_{e_2,v_2}\dots, p_{e_k,v_k}\}$ for a cycle $e_1=\{v_1,v_2\},\ldots, e_k=\{v_k, v_1\}$ in $G$, then $X$ also contains the subset $\{p_{e_1,v_2},p_{e_2,v_3},\dots, p_{e_k,v_{1}}\}$.
\end{enumerate}
\end{theorem}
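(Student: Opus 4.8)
The plan is to use the standard criterion that a subset $X \subseteq V(P_G)$ is the vertex set of a face if and only if there is a linear functional $\phi = \sum_{v} a_v x_v + \sum_e b_e y_e$ whose maximum value $M$ over $P_G$ is attained exactly at the points of $X$. Recording the values
\[
\phi(\mathbf{x}_v) = a_v, \quad \phi(p_e) = a_v + a_w - b_e, \quad \phi(p_{e,v}) = a_v - a_w + b_e, \quad \phi(p_{e,w}) = -a_v + a_w + b_e
\]
for $e = \{v,w\}$, the whole argument rests on two elementary identities: $\phi(p_e) + \phi(p_{e,v}) = 2a_v = 2\phi(\mathbf{x}_v)$, reflecting $p_e + p_{e,v} = 2\mathbf{x}_v$, and, around a cycle $e_1=\{v_1,v_2\},\dots,e_k=\{v_k,v_1\}$, the telescoping identity $\sum_i \phi(p_{e_i,v_{i+1}}) = \sum_i b_{e_i} = \sum_i \phi(p_{e_i,v_i})$.

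For necessity, suppose $X$ is a face realized by such a $\phi$. If $p_e, p_{e,v} \in X$ then both equal $M$, so the first identity gives $a_v = M$; then for every other edge $e' = \{v,w'\}$ we have $\phi(p_{e'}) + \phi(p_{e',v}) = 2a_v = 2M$ with each summand $\le M$, forcing both $p_{e'}$ and $p_{e',v}$ to attain $M$ and hence lie in $X$, which is condition (i). Likewise, if $p_{e_i,v_i} \in X$ for all $i$ around a cycle, the telescoping identity gives $\sum_i \phi(p_{e_i,v_{i+1}}) = kM$ with each summand $\le M$, so every $p_{e_i,v_{i+1}}$ attains $M$ and lies in $X$, which is condition (ii). Both directions are immediate: a sum of terms each $\le M$ that equals $kM$, with $k$ the number of terms, forces every term to equal $M$.

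The substance is the converse: given $X$ satisfying (i) and (ii), I construct $\phi$. First I read off the constraints $X$ imposes on the $a_v$. Call a vertex $v$ \emph{saturated} if some edge $e \ni v$ has $\{p_e, p_{e,v}\} \subseteq X$; condition (i) makes this notion consistent, and the per-edge analysis forces $a_v = M$ for saturated $v$ and $a_v < M$ otherwise. Comparing the two $+\mathbf{y}$-vertices of each edge $e=\{v,w\}$, whose $\phi$-difference is $2(a_v - a_w)$ independently of $b_e$, yields an orientation constraint: $a_v = a_w$ if both $p_{e,v},p_{e,w} \in X$, a strict $a_v > a_w$ if only $p_{e,v} \in X$, the reverse if only $p_{e,w} \in X$, and no constraint if neither. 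Once a feasible family $(a_v)$ is fixed, each $b_e$ can be chosen independently to realize the exact intersection $X \cap \{p_e,p_{e,v},p_{e,w}\}$, a routine check of the eight local configurations.

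The crux is therefore the feasibility of this system of equalities and strict inequalities on $(a_v)$. Feasibility fails exactly when there is a closed walk along which $a$ is everywhere non-increasing with at least one strict step; choosing such a walk of minimal length makes it a simple cycle, since any repeated vertex would split off a shorter bad sub-walk. On this cycle every traversed edge has $p_{e_i,v_i} \in X$, which holds for both equality and strict edges, so condition (ii) applies and forces every reverse vertex $p_{e_i,v_{i+1}} \in X$ as well, contradicting the strict step, which requires $p_{e_i,v_{i+1}} \notin X$. Hence the system is feasible; the saturated vertices sit consistently at level $M$ above the rest, and the resulting $\phi$ attains $M$ precisely on $X$. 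I expect this feasibility step, and in particular the reduction of a general inconsistency to a simple cycle so that condition (ii) can be invoked verbatim, to be the main obstacle.
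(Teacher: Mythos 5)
Your proposal is correct, and it takes a genuinely different route from the paper's proof. The paper does not construct supporting functionals at all: it uses the facet classification of Arkani-Hamed--Benincasa--Postnikov (Theorem~\ref{thm:facets}) as a black box. For sufficiency it realizes $X$ as an intersection of facets, building for each vertex $y\notin X$ a connected subgraph $H$ with $X\subseteq F_H$ and $y\notin F_H$ (when $y=p_e$, $H$ is a connected component of the edge-induced subgraph on $\{e' \mid p_{e'}\notin X\}$; when $y=p_{e,v}$, $H$ is grown from $v$ by repeatedly attaching edges $e'=\{v',w'\}$ with $p_{e',v'}\in X$, and condition (ii) is exactly what prevents $e$ itself from ever closing a cycle in $H$). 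For necessity it observes that conditions (i) and (ii) pass to intersections of vertex sets, so they only need to be verified on facets, where they can be read off from Theorem~\ref{thm:facets}. Your functional-based necessity argument, resting on $\phi(p_e)+\phi(p_{e,v})=2\phi(\mathbf{x}_v)$ and the telescoping identity around a cycle, is shorter and arguably more illuminating than the paper's, since it shows exactly why these two conditions and no others appear. Your sufficiency direction is where the weight shifts: where the paper just intersects known facets, you must solve the system of level constraints ($a_v=M$ exactly for saturated $v$) and orientation constraints, invoking the standard feasibility criterion for systems of weak/strict difference constraints and using condition (ii) precisely to kill a minimal (hence simple) bad cycle. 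Two points are left implicit in your sketch but do check out, each needing condition (i): every orientation constraint incident to a saturated vertex places that vertex on the weakly larger side (otherwise a strict constraint $a_v > a_w = M$ would make the system infeasible), and the eight-case verification that each $b_e$ can be chosen edge by edge succeeds because (i) excludes the incompatible local configurations. The net trade-off: the paper's proof is short given Theorem~\ref{thm:facets} and records which facets cut out each face, while yours is self-contained from Definition~\ref{def:cosm} and independently recovers the face structure without relying on the facet theorem.
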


\begin{proof}

First, we show that every set of vertices $X$ satisfying the conditions (i) and (ii) defines a face of $P_G$.
We do this by proving that $X$ is an intersection of facets of $P_G$.
Let $y$ be a vertex of $P_G$ with $y\notin X$. It suffices to find a facet $F_H$ with $X\subseteq F_H$ and $y\notin F_H$. There are two cases: $y=p_e$ or $y=p_{e,v}$ for some $e\in E$ and $v\in V$. 

\begin{description}
\item[Case 1] Suppose $y=p_e$.
Let us define $H=(V_H,E_H)$ to be the connected component of the edge induced subgraph $\{ e\,|\, p_e\notin X\}$ containing the edge $e$.
Then the facet $F_H$ contains all vertices of $P_G$ except the vertices $p_{e'}$ with $e'\in E_H$ and the vertices $p_{e'',v''}$ with $e''=\{v'',w''\}\notin E_H$ and $v''\in V_H$. 
Since $e\in H$, the facet $F_H$ does not contain $y$.
Moreover by construction, $F_H$ contains all vertices in $X$ of the form $p_{e'}$ for some $e'\in E$.

Secondly, consider a vertex $p_{e'',v''}\in X$ for some $e''=\{v'',w''\}$.
Assume for a contradiction that $p_{e'',v''}\not\in F_H$.
Thus by Theorem~\ref{thm:facets} this means that $v''\in V_H$, but $e''\not\in E_H$ and thus $p_{e''}\in X$.
Since $H$ is connected, there exists an edge $f\in E_H$ that is adjacent to $v''$.
By condition (i), the vertex $p_{f}\in X$ which contradicts the construction of $H$.

\item[Case 2] Let $y = p_{e,v}$ with $e=\{v,w\}$.
We define a connected subgraph $H$ inductively. Let $H_{0} = v$  be the vertex $v$ itself. The subgraph $H_{i+1}$ is defined from $H_{i}$ in the following way:
\[
H_{i+1} := H_i \cup \{e'=\{v',w'\} \,|\, v'\in H_i, e'\notin H_i, \text{ and } p_{e',v'} \in X \}.
\]
Since $H_i\subseteq H_{i+1}$ and $G$ is a finite graph, the sequence $(H_i)_{i\in \mathbb{N}}$ stabilizes. We define $H$ to be the limit of the sequence $(H_i)_{i\in \mathbb{N}}$, by construction $H$ is connected. 
We need to show the following three things:
\begin{enumerate}
    \item Let $X_1:=\{p_{e'}\mid p_{e'}\in X\}$. We need to show $X_1\subseteq F_H$.
    We inductively show $X_1\subseteq F_{H_i}$ for all $i\ge 0$ which implies the claim.
    The case of $F_{H_0}$ is trivial as this facet contains all vertices of the form $p_{e'}$ in $P_G$.
    Next we show $X_1\subseteq F_{H_1}$.
    Consider the edge $e'=\{v,w'\}\in H_1$ and assume that $p_{e'}\in X$.
    By construction of $H_1$, we have $p_{e',v}\in X$ hence by condition (i), the vertex $y=p_{e,v}$ must also be in $X$ which contradicts the assumption that $y\notin X$. Hence, $p_{e'}\not\in X$ and $X_1\subseteq F_{H_1}$.
    
    So now assume $X_1\subseteq F_{H_i}$ for some $i\ge 1$.
    Consider again an edge $e'=\{v',w'\}\in H_{i+1}\setminus H_i$ with $v'\in H_i$.
    Then there exists an edge $e''=\{v',w''\}\in H_i$ since $H_i$ is connected and $i\ge 1$.
    By induction, this implies that $p_{e''}\not \in X$ and by construction of $H_{i+1}$ the vertex $p_{e',v'}\in X$.
    Hence, $p_{e'}\not \in X$ by condition (i) and thus $X_1\subseteq F_{H_{i+1}}$.
    \item Let $X_2:=X\setminus X_1$. Secondly, we show that $X_2\subseteq F_H$.
    This follows from the construction of $H$ as if there is a vertex $p_{e',v'}\in X\setminus F_H$ we would add the edge $e'$ to $H$ which contradicts the definition of~$H$.
    \item Lastly, we need to show $y\notin F_H$.
    Assume that $y=p_{e,v}\in F_H$.
    This means that $e\in H$.
    Therefore there exists a cycle in $H$ of the form $e_1=\{v_1,v_2\}$, $e_2=\{v_2,v_3\}$, \dots, $e_r=\{v_r,v_1\}$ with $e=e_r$, $v=v_1$, and $e_i\in H_i\setminus H_{i-1}$.
    Thus, $p_{e_i,v_{i}}\in X$ for all $i=1,\dots,r$ by construction of $H_i$.
    By condition~(ii) this implies that $p_{e_i,v_{i+1}}\in X$ for all $i\in \Z/r\Z$.
    In particular $y=p_{e_r,v_1}\in X$ which contradicts the assumption on $y$.
    Therefore, $y\notin F_H$.
\end{enumerate}
\end{description}

For the converse, we need to show that every face of $P_G$ satisfies the condition (i) and (ii).
First, notice that if two subsets $X$ and $Y$ satisfy both conditions so does their intersection $X\cap Y$.
Therefore it suffices to show conditions (i) and (ii) for the facets of $P_G$.

Let $H=(V_H,E_H)$ be a connected subgraph of $G$ and $F_H$ its associated facet.
Consider a node $v\in V$ and assume that  $p_e$ and $p_{e,v}$ are in $F_H$, hence $E_H$ does not contain the edge $e$ as well as any other edge adjacent to the node $v$. Hence by definition of $F_H$, the vertices $p_{e'},p_{e',v}$ are in $F_H$ for all edges $e'$ adjacent to the node $v$.
So $F_H$ satisfies the condition (i).

For the condition (ii), let $F_H$ contain a subset $\{p_{e_1,v_1},p_{e_2,v_2},\dots, p_{e_k,v_k}\}$ for a cycle $e_1=\{v_1,v_2\},\ldots, e_k=\{v_k, v_1\}$ in $G$.  First, assume that $\{e_1,\ldots,e_k\}\subseteq E_H$. In this case, by construction of $F_H$ it contains all the vertices $\{p_{e_1,v_2},p_{e_2,v_3},\dots, p_{e_k,v_{1}}\}$ as well, so the condition (ii) is satisfied.

In the case when $\{e_1,\ldots,e_k\}\not\subseteq E_H$, any edge adjacent to the nodes $v_1,\ldots,v_k$ can not be contained in $H$ as otherwise one of the vertices $p_{e_i,v_i}$ would be excluded from $F_H$. But then, by construction of $F_H$ vertices $\{p_{e_1,v_2},p_{e_2,v_3},\dots, p_{e_k,v_{1}}\}$ are  contained in the facet $F_H$, so the condition (ii) is satisfied.
\end{proof}
An immediate corollary of this theorem yields a complete description of the edge graph $\Gamma_G$ of $P_G$.

\begin{corollary}\label{cor:edge_graph}
The edge graph $\Gamma_G$ of $P_G$ is a complete graph on the vertices of $P_G$ with the following edges removed:
\begin{enumerate}
    \item $\{p_e,p_{e,v}\}$ for any edge $e$ and a non-leaf node $v$ of $G$.
    \item  $\{p_{e,v_1},p_{e',v_2}\}$ for a pair of parallel edges $e, e'$ between the nodes $v_1$ and $v_2$.
\end{enumerate}
\end{corollary}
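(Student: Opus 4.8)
The plan is to use the fact that the edge graph $\Gamma_G$ records exactly the two-element subsets of $V(P_G)$ that form faces, i.e.\ the $1$-dimensional faces of $P_G$. By Theorem~\ref{thm:faces}, a two-element set $\{X,Y\}$ is a face if and only if it satisfies conditions (i) and (ii). Hence $\{X,Y\}$ is a \emph{non-edge} of $\Gamma_G$ precisely when it violates (i) or (ii), and the strategy is to enumerate exactly which two-element sets do so.

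First I would analyze condition (i). This condition only imposes a constraint on a set that already contains both $p_e$ and $p_{e,v}$ for some edge $e=\{v,w\}$; for a two-element set this forces $\{X,Y\}=\{p_e,p_{e,v}\}$. In that case (i) demands that $p_{e'}$ and $p_{e',v}$ lie in the set for every edge $e'$ adjacent to $v$. Since only the two vertices $p_e,p_{e,v}$ are available, this closure can be met if and only if $e$ is the unique edge at $v$, i.e.\ $v$ is a leaf. Therefore $\{p_e,p_{e,v}\}$ fails (i) exactly when $v$ is a non-leaf node, which is precisely the first family of removed edges.

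Next I would treat condition (ii). It is triggered only when the set contains the ``outgoing'' vertices $\{p_{e_1,v_1},\dots,p_{e_k,v_k}\}$ of a cycle $e_1=\{v_1,v_2\},\dots,e_k=\{v_k,v_1\}$. Because a cycle uses $k$ distinct edges and so contributes $k$ distinct vertices, a two-element set can trigger (ii) only for $k=2$, that is for a pair of parallel edges $e,e'$ between two nodes $v_1,v_2$, with $\{X,Y\}=\{p_{e,v_1},p_{e',v_2}\}$. Here (ii) would require the reverse pair $\{p_{e,v_2},p_{e',v_1}\}$ to be present, which it is not, so condition (ii) always fails for such a set. These are exactly the pairs in the second family.

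Finally I would observe that any two-element set not of one of these two shapes neither triggers (i), since it does not contain a pair $p_e,p_{e,v}$, nor (ii), since it does not contain the outgoing vertices of a cycle; hence both conditions hold vacuously and $\{X,Y\}$ is an edge. Combining the three steps shows that the non-edges of $\Gamma_G$ are exactly the two listed families, proving the claim. The only point requiring care is the closure nature of condition (i): one must check that, with only two available vertices, the completion required by (i) is attainable solely when $v$ is a leaf, and that in all remaining configurations the hypotheses of (i) and (ii) are never met, so that these conditions hold trivially.
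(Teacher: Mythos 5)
Your proof is correct and follows exactly the route the paper intends: the paper states this corollary as an immediate consequence of Theorem~\ref{thm:faces}, and your argument is precisely the spelled-out specialization of conditions (i) and (ii) to two-element vertex sets, correctly identifying that (i) fails for $\{p_e,p_{e,v}\}$ exactly when $v$ is not a leaf and that (ii) can only be triggered by a cycle of length two, i.e.\ a pair of parallel edges.
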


A more general statement provides a description of all simplex faces of $P_G$.

\begin{theorem}\label{thm:simplex}
Let $G=(V,E)$ be an undirected graph.
A set of vertices $X\subseteq V(P_G)$ defines a simplex face of~$P_G$ if and only if 
\begin{enumerate}
\item[(i)] the induced subgraph of the edge graph $\Gamma_G$ to the vertex set $X$ is a complete graph, and
\item[(ii)] $X$ does not contain a subset $\{p_{e_1,v_1},\dots p_{e_k,v_k}\}$ for a cycle $e_1=\{v_1,v_2\},\ldots, e_k=\{v_k, v_1\}$ in $G$.
\end{enumerate}
\end{theorem}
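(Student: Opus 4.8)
The plan is to reduce everything to Theorem~\ref{thm:faces} via the standard characterization of simplices: a polytope $Q$ is a simplex if and only if \emph{every} subset of its vertices spans a face of $Q$. Since $X$ is a vertex set of $P_G$ and faces of faces are again faces, $\conv(X)$ is a simplex face of $P_G$ precisely when every subset $Y\subseteq X$ is a face of $P_G$. By Theorem~\ref{thm:faces} this happens exactly when every $Y\subseteq X$ satisfies the two conditions of that theorem, so the whole argument amounts to checking that this hereditary requirement is equivalent to the two conditions in the statement.

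For the forward direction I would assume $\conv(X)$ is a simplex, so every $Y\subseteq X$ is a face. Applying this to pairs $Y=\{a,b\}$ shows that any two vertices of $X$ are joined by an edge of $P_G$, i.e.\ the induced subgraph of $\Gamma_G$ on $X$ is complete, giving condition~(i). For condition~(ii), suppose $X$ contained a subset $Y_0=\{p_{e_1,v_1},\dots,p_{e_k,v_k}\}$ coming from a cycle. Then $Y_0$ would itself be a face, yet condition~(ii) of Theorem~\ref{thm:faces} forces any face containing $Y_0$ to also contain $\{p_{e_1,v_2},\dots,p_{e_k,v_1}\}$; since the edges of a cycle are distinct these points do not lie in $Y_0$, a contradiction. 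Hence no such $Y_0$ exists.

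For the converse I would assume (i) and (ii) and verify that an arbitrary $Y\subseteq X$ satisfies both conditions of Theorem~\ref{thm:faces}, which then makes $Y$ a face and, by the simplex criterion, shows $\conv(X)$ is a simplex. Condition~(ii) of Theorem~\ref{thm:faces} is vacuous for $Y$, since $X$ contains no cycle subset by hypothesis~(ii) and $Y\subseteq X$. For condition~(i) of Theorem~\ref{thm:faces}, suppose $Y$ contains $p_e$ and $p_{e,v}$ for $e=\{v,w\}$. Then $X$ contains both, and since $\Gamma_G$ is complete on $X$, Corollary~\ref{cor:edge_graph} forces $v$ to be a leaf, as otherwise $\{p_e,p_{e,v}\}$ would be a missing edge. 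But if $v$ is a leaf then $e$ is its only incident edge, so the conclusion of condition~(i) reduces to $p_e,p_{e,v}\in Y$, which already holds; thus every $Y\subseteq X$ is a face.

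The routine heart of the argument is the simplex criterion, which I would justify by noting that a maximal affinely independent set of vertices spans a face of full dimension, hence the whole polytope, forcing the number of vertices to equal $\dim+1$. The only genuinely graph-theoretic subtlety, and the step I expect to need the most care, is the leaf distinction in condition~(i) of Theorem~\ref{thm:faces}: one must observe that completeness of $\Gamma_G$ on $X$ rules out every \emph{non-leaf} occurrence of the pattern $\{p_e,p_{e,v}\}$ through Corollary~\ref{cor:edge_graph}, so that the surviving leaf case collapses trivially. A minor bookkeeping remark is that the $k=2$ instance of condition~(ii) is already subsumed by condition~(i) through the parallel-edge non-edges of Corollary~\ref{cor:edge_graph}; this is a useful consistency check but needs no separate treatment.
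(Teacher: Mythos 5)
Your proof is correct, and it diverges from the paper's own argument in a way that is worth spelling out. The paper proves the converse by observing, as you do, that completeness of the induced subgraph of $\Gamma_G$ on $X$ together with Corollary~\ref{cor:edge_graph} makes condition (i) of Theorem~\ref{thm:faces} hold automatically (the leaf case), so that $X$ is a face; it then concludes in one stroke that a face ``with a complete edge-graph'' is a simplex face. Read literally, that last implication is false: neighborly polytopes have complete graphs without being simplices, and the paper's own cycle faces (Proposition~\ref{prop:cycleface}) are cyclic polytopes $C(2d,2d-2)$, which for $d\ge 3$ have complete edge graphs yet are not simplices. What rules these out is precisely condition (ii), and your hereditary formulation --- $\conv(X)$ is a simplex face if and only if every subset $Y\subseteq X$ is a face of $P_G$, which by Theorem~\ref{thm:faces} reduces to checking the two conditions of that theorem for every $Y$ --- is exactly the mechanism that makes this rigorous: hypothesis (ii) kills the cycle obstruction for all subsets at once, and your leaf analysis shows that condition (i) of Theorem~\ref{thm:faces} is inherited by all subsets. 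Your forward direction is likewise more explicit than the paper's (which only asserts that a simplex face ``satisfies (ii) by Theorem~\ref{thm:faces}''): applying Theorem~\ref{thm:faces}(ii) to the face spanned by a putative cycle subset $Y_0$ and noting that the reflected vertices $p_{e_i,v_{i+1}}$ are all distinct from those of $Y_0$ gives the contradiction cleanly. The only overhead of your route is that the simplex criterion itself must be justified, and your argument for it --- a maximal affinely independent subset of the vertices spans a full-dimensional face, hence the whole polytope, forcing affine independence of all vertices --- is sound. In short: same ingredients (Theorem~\ref{thm:faces} plus Corollary~\ref{cor:edge_graph}), but your proof supplies the hereditary argument that the paper's compressed wording needs in order to be airtight.
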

\begin{proof}
By Corollary~\ref{cor:edge_graph} if the subgraph of $\Gamma_G$ induced by the vertices $X$ is a complete graph, $X$ must satisfy property (i) in Theorem~\ref{thm:faces}. Hence a subset $X$ satisfying the properties (i) and (ii) is a face of $P_G$ by Theorem~\ref{thm:faces}.
Moreover, the same properties (i) and (ii) are satisfied for every subset of vertices of the set $X$.
Hence, any subset of vertices of $X$ defines a face of $P_G$, so the vertices of $X$ form a simplex face.

For the converse, if $X$ defines a simplex face, then the induced subgraph of $\Gamma_G$ to $X$ is a complete graph and $X$ satisfies (ii) by Theorem~\ref{thm:faces}.
\end{proof}

\begin{remark}
The face structure of cosmological polytopes was also studied in \cite{ BT22,benincasa2020steinmann} from a slightly different perspective. More concretely, these works study which collections of facets of $P_G$ intersect in a face of expected codimension. The main tool in this analysis is the connection to the residues of the canonical form~${\mathbf \Omega}_{P_G}$.
\end{remark}

\subsection{Special faces}\label{sec:special_faces}
In this subsection, we will discuss two types of special faces appearing in cosmological polytopes which are the minimal non-simplex faces of cosmological polytopes (see Corollary~\ref{cor:minnonsim}).
We call the ones described in Proposition~\ref{prop:vertexface} \emph{vertex faces} and the ones described in Proposition~\ref{prop:cycleface} \emph{cycle faces}.

\begin{proposition}\label{prop:vertexface}
Let $v\in V$ be a vertex of $G$ of degree $d$ with the adjacent edges $e_1=\{v,w_1\},\ldots,e_d=\{v, w_d\}$. Then the cosmological polytope $P_G$ has a face of dimension $d$ with the $2d$ vertices given by
\[
p_{e_1}, p_{e_1, v},\ldots, p_{e_d}, p_{e_d, v}.
\]
We call this face a \emph{vertex face} $F_v$.

Moreover, the face $F_v$ is combinatorially equivalent to a $d$-dimensional cross-polytope.
\end{proposition}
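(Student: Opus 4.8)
The plan is to decouple the statement into two independent tasks: first certify that
$X := \{p_{e_1}, p_{e_1,v}, \dots, p_{e_d}, p_{e_d,v}\}$ is a face of $P_G$ by invoking Theorem~\ref{thm:faces}, and then analyse the affine geometry of $\conv(X)$ directly to extract simultaneously its dimension and its combinatorial type. This separation is convenient because the face criterion and the cross-polytope structure rely on essentially disjoint features of the defining vectors.

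For the face property I would check conditions (i) and (ii) of Theorem~\ref{thm:faces}. Every vertex of $X$ of the shape $p_{e,u}$ has second index $u=v$, so the hypothesis of (i) can only be triggered at the node $v$ itself; and at $v$ the required companions $p_{e'},p_{e',v}$ for every edge $e'$ incident to $v$ are by construction already present in $X$, so (i) holds. For (ii), any subset $\{p_{e'_1,v_1},\dots,p_{e'_k,v_k}\}$ indexed by a cycle has pairwise distinct second indices $v_1,\dots,v_k$, whereas every vertex of $X$ of this form has second index $v$; hence no cycle-indexed subset of $X$ of length $\ge 2$ exists and (ii) is vacuous. Thus $X$ defines a face $F_v$ of $P_G$.

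The geometric core rests on the single identity $p_{e_i}+p_{e_i,v}=2\mathbf{x}_v$, obtained by adding the two defining expressions. Writing $a_i := p_{e_i,v}-\mathbf{x}_v = \mathbf{y}_{e_i}-\mathbf{x}_{w_i}$ this reads $p_{e_i,v}=\mathbf{x}_v+a_i$ and $p_{e_i}=\mathbf{x}_v-a_i$, so the $2d$ vertices come in antipodal pairs $\mathbf{x}_v\pm a_i$ about the common centre $\mathbf{x}_v$. The vectors $a_1,\dots,a_d$ are linearly independent, since their $\mathbf{y}$-coordinates are the distinct standard basis vectors $\mathbf{y}_{e_1},\dots,\mathbf{y}_{e_d}$. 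Therefore the affine isomorphism determined by $\mathbf{x}_v\mapsto 0$ and $a_i\mapsto e_i$ carries $F_v$ onto $\conv(\pm e_1,\dots,\pm e_d)$, the standard $d$-dimensional cross-polytope. This single map yields $\dim F_v=d$, gives the claimed combinatorial equivalence, and confirms that all $2d$ listed points are genuine vertices, as they are in convex position.

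All steps are short, and I do not anticipate a genuine obstacle once the midpoint identity is in hand. The only points demanding care are the bookkeeping in (ii) — verifying that the distinctness of cycle vertices truly rules out every cycle-indexed subset of $X$, including degenerate ``cycles'' formed by parallel edges — and the observation that the endpoints $w_i$ need not be distinct when $v$ is incident to parallel edges; the latter is harmless, since the linear independence of the $a_i$ uses only the edge (i.e.\ $\mathbf{y}$) coordinates. As a sanity check one may also match the non-edges of $F_v$ against Corollary~\ref{cor:edge_graph}, which identifies precisely the antipodal pairs $\{p_{e_i},p_{e_i,v}\}$ as the missing edges of a cross-polytope when $v$ is a non-leaf.
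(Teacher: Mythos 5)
Your proposal is correct and follows essentially the same route as the paper: the face property is certified via conditions (i) and (ii) of Theorem~\ref{thm:faces} (which you verify in more detail than the paper's ``clearly satisfies''), and the cross-polytope structure comes from centering at $\mathbf{x}_v$ and observing that the antipodal pairs $\mathbf{x}_v\pm(\mathbf{y}_{e_i}-\mathbf{x}_{w_i})$ involve linearly independent vectors, exactly as in the paper's shift-by-$-\mathbf{x}_v$ argument. The only cosmetic caveat is the notational clash in your final map, where $e_i$ denotes both a graph edge and a standard basis vector of $\R^d$.
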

\begin{proof}
The first statement follows directly from Theorem~\ref{thm:faces} as the set of vertices $\{p_{e_1}, p_{e_1, v},\ldots, p_{e_d}, p_{e_d, v}\}$ clearly satisfies conditions (i) and (ii). 

For the proof of the second statement notice that a shift of the face $F_v$ by the vector $-\mathbf{x}_v$ is the convex hull of the points
\[
\mathbf{y}_{e_1}-\mathbf{x}_{w_1}, \mathbf{x}_{w_1}-\mathbf{y}_{e_1}, \ldots , \mathbf{y}_{e_d}-\mathbf{x}_{w_d}, \mathbf{x}_{w_d}-\mathbf{y}_{e_d}. 
\]
Since the vectors $\mathbf{y}_{e_1}-\mathbf{x}_{w_1},\ldots, \mathbf{y}_{e_d}-\mathbf{x}_{w_d}$ are linearly independent, the vertex face $F_v$ is affinely (and in particularly combinatorially) equivalent to a $d$-dimensional cross-polytope.
\end{proof}

Recall that the $d$-dimensional \emph{cyclic polytope} $C(n,d)$ with $n$ vertices is the convex hull of $x(t_1),\dots,x(t_n)$ where $t_1<t_2<\dots<t_n$ are real numbers and $x:\R\to\R^d,t\mapsto(t,t^2,t^3,\dots,t^d)$ is a parametrization of the moment curve. It is known that cyclic polytopes are simplicial, i.e.\ all its proper faces are simplices (see for example \cite{gale1963neighborly}).
By Gale's evenness condition (\cite[Theorem 0.7]{Ziegler}), for a set of indices $I\subset [n]$ of size $d$, the corresponding set of vertices $\{x(t_i)\}_{i\in I}$ form a facet of $C(n,d)$ if and only if any two elements in $[n]\setminus I$ are separated by an even number of elements from $[n]$.

\begin{proposition}\label{prop:cycleface}
Let $e_1=\{v_1,v_2\},\ldots, e_d=\{v_d, v_1\}$ be a cycle $\sigma$ of length $d$ in $G$ with $v_i\neq v_j$ for $1\le i < j \le d$ and $d>1$.
Then the cosmological polytope $P_G$ has a face of dimension $2d-2$ with the $2d$ vertices:
\[
p_{e_1,v_1}, p_{e_1,v_2},\ldots,p_{e_d,v_d}, p_{e_d,v_1}.
\]
We call this face a \emph{cycle face} $F_\sigma$.

Moreover, the face $F_\sigma$ is combinatorially equivalent to a cyclic polytope of dimension  $2d-2$ with $2d$ vertices.
\end{proposition}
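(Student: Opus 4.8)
The plan is to identify the face lattice of $F_\sigma$ combinatorially from Theorem~\ref{thm:faces}, and to match it with the face lattice of the cyclic polytope $C(2d,2d-2)$ computed through Gale's evenness condition recalled above. Throughout I abbreviate $a_i:=p_{e_i,v_i}$ and $b_i:=p_{e_i,v_{i+1}}$ for $i\in\Z/d\Z$, so that $a_i=\mathbf{y}_{e_i}+\mathbf{x}_{v_i}-\mathbf{x}_{v_{i+1}}$ and $b_i=\mathbf{y}_{e_i}-\mathbf{x}_{v_i}+\mathbf{x}_{v_{i+1}}$, and I write $A=\{a_1,\dots,a_d\}$ and $B=\{b_1,\dots,b_d\}$. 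First I would pin down the dimension by solving the linear relations $\sum_i(\alpha_i a_i+\beta_i b_i)=0$ on coordinates: matching the $\mathbf{y}_{e_i}$-coordinate forces $\beta_i=-\alpha_i$, and matching the $\mathbf{x}_{v_i}$-coordinate then forces $\alpha_i=\alpha_{i-1}$ for all $i$. Hence the only relation is the telescoping identity $\sum_i a_i=\sum_i b_i\,(=\sum_i\mathbf{y}_{e_i})$, whose coefficients sum to zero, so the space of affine dependencies among the $2d$ points is exactly one-dimensional and $\dim F_\sigma=2d-1-1=2d-2$, as claimed.

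Next I would determine all faces of $F_\sigma$ using that these are precisely the faces of $P_G$ whose vertices lie in $A\cup B$. Condition (i) of Theorem~\ref{thm:faces} never applies, since no vertex of the form $p_e$ occurs among $A\cup B$. For condition (ii), the only cycle of $G$ supported on the edges $e_1,\dots,e_d$ is $\sigma$ itself, traversed in either direction; thus (ii) reduces to the two implications: if $X\supseteq A$ then $X\supseteq B$, and if $X\supseteq B$ then $X\supseteq A$. Therefore a subset $X\subseteq A\cup B$ is a face of $F_\sigma$ if and only if $A\not\subseteq X$ and $B\not\subseteq X$; equivalently, the boundary complex of $F_\sigma$ is the simplicial complex on $A\cup B$ whose minimal non-faces are exactly $A$ and $B$.

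Finally I would read off the faces of $C(2d,2d-2)$ and exhibit the matching relabeling. Its facets have $2d-2$ vertices, so their complements are pairs $\{p,q\}\subseteq[2d]$; by Gale's evenness condition such a pair is a facet complement exactly when $q-p$ is odd, i.e. when $p$ and $q$ have opposite parity. Since cyclic polytopes are simplicial, a subset $T\subseteq[2d]$ is a face if and only if it is contained in some facet, i.e. if and only if its complement contains an opposite-parity pair, which happens precisely when $T$ contains neither all odd nor all even labels. Hence the minimal non-faces of $C(2d,2d-2)$ are the odd labels and the even labels, each of size $d$. The bijection $\phi$ sending $a_i\mapsto 2i-1$ and $b_i\mapsto 2i$ carries $A$ and $B$ to the odd and even labels, identifying the two minimal non-faces of $F_\sigma$ with those of $C(2d,2d-2)$; as both boundary complexes are determined by their minimal non-faces, $\phi$ induces an isomorphism of face lattices, proving the combinatorial equivalence.

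I expect the main obstacle to be the careful justification of the second step, namely verifying that $\sigma$ is the unique cycle supported on $\{e_1,\dots,e_d\}$ so that condition (ii) collapses to the two implications above; once the two minimal non-faces are isolated, the parity relabeling $\phi$ is the clean finishing move. A secondary point demanding care is the cyclic bookkeeping of the indices $v_{d+1}=v_1$ in the dimension computation.
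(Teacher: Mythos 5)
Your proof is correct and takes essentially the same approach as the paper: both use Theorem~\ref{thm:faces} to show that the proper faces of $F_\sigma$ are exactly the vertex subsets omitting at least one $p_{e_i,v_i}$ and at least one $p_{e_j,v_{j+1}}$ (you phrase this via the two minimal non-faces, the paper via the facet complements $\{p_{e_i,v_i},p_{e_j,v_{j+1}}\}$), and both identify this structure with $C(2d,2d-2)$ through Gale's evenness condition under the interleaved vertex ordering. The only addition is your explicit affine-dependence computation $\sum_i p_{e_i,v_i}=\sum_i p_{e_i,v_{i+1}}$, which makes precise the dimension count $2d-2$ that the paper leaves implicit.
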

\begin{proof}
The first statement follows directly from Theorem~\ref{thm:faces} as the set of vertices 
\[
X=\{p_{e_1,v_1}, p_{e_1,v_2},\ldots,p_{e_d,v_d}, p_{e_d,v_1}\}
\]
clearly satisfies conditions (i) and (ii) of this theorem.

For the second statement, we will show that analogously to $C(2d,2d-2)$, facets of $F_\sigma$ are described by Gale's evenness condition which implies that the polytopes are combinatorially equivalent. 

First notice that if a subset $Y\subseteq X$ defines facet of $F_\sigma$, then $|Y|\geq 2d-2$. Moreover, by condition (ii) of Theorem~\ref{thm:faces}, to define a proper face of $F_\sigma$, the set $Y$ should not contain at least one of the points of type $p_{e_i,v_i}$ and $p_{e_j,v_{j+1}}$ for $i,j\in \Z/d\Z$. Therefore, $|Y|=2d-2$, i.e. every facet of $F_\sigma$ is a simplex. Finally, notice that the condition that $X\setminus Y = \{p_{e_i,v_i},p_{e_j,v_{j+1}}\}$ for some $i,j\in \Z/d\Z$ is equivalent to Gale's evenness condition if we order the elements of $X$ in the following way:
\[
(p_{e_1,v_1}, p_{e_1,v_2},\ldots,p_{e_d,v_d}, p_{e_d,v_1}). \qedhere\]
\end{proof}

\begin{remark}
Note that the cycle face $F_\sigma$ of $P_G$ corresponding to a cycle $\sigma$ of $G$ coincides with the the scattering facet of $P_{\sigma}$.
More generally, for any subgraph $H\subset G$, the scattering facet of $P_H$ appears as a face of $P_G$.
\end{remark}

We can now characterize the minimal non-simplex faces of a cosmological polytope, i.e. the faces that are combinatorially a simplicial polytope but not a simplex.
\begin{corollary}\label{cor:minnonsim}
A minimal non-simplex face $F$ of a cosmological polytope is either a vertex face or a cycle face.
\end{corollary}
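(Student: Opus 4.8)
The plan is to show that a minimal non-simplex face $F$ of $P_G$, given by a vertex set $X$, must arise from either a vertex or a cycle obstruction, and that minimality forces it to be exactly one of the two families described in Propositions~\ref{prop:vertexface} and \ref{prop:cycleface}. First I would observe that since $F$ is a face but not a simplex, Theorem~\ref{thm:simplex} tells us that $X$ must fail at least one of the two conditions in that theorem: either the induced subgraph of $\Gamma_G$ on $X$ is not complete, or $X$ contains a full cycle subset $\{p_{e_1,v_1},\dots,p_{e_k,v_k}\}$. These two failure modes are precisely what the two families of special faces repair, so the strategy is to run the analysis in two cases according to which condition fails.

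In the first case, suppose $X$ contains a missing edge of $\Gamma_G$. By Corollary~\ref{cor:edge_graph} the only missing edges are of type $\{p_e,p_{e,v}\}$ for a non-leaf node $v$, or $\{p_{e,v_1},p_{e',v_2}\}$ for parallel edges $e,e'$. I would argue that the first kind of missing edge forces a vertex face: if $p_e,p_{e,v}\in X$ with $v$ a non-leaf of degree $d$, then condition (i) of Theorem~\ref{thm:faces} (which $X$ must satisfy since it is a face) propagates to force all of $p_{e_1},p_{e_1,v},\dots,p_{e_d},p_{e_d,v}\in X$, i.e. the whole vertex face $F_v$ sits inside $X$. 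Since $F_v$ is itself a non-simplex face (it is a cross-polytope by Proposition~\ref{prop:vertexface}), minimality of $F$ forces $X=F_v$. The second kind of missing edge, coming from parallel edges, should be handled by noting that a pair of parallel edges $e,e'$ between $v_1,v_2$ is a cycle of length $2$, so $\{p_{e,v_1},p_{e',v_2}\}$ being present places us inside the cycle analysis below; I would note the overlap and route it through the cycle case.

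In the second case, suppose $X$ contains a full cycle subset $\{p_{e_1,v_1},\dots,p_{e_k,v_k}\}$ for a cycle $\sigma$ of length $k$. By condition (ii) of Theorem~\ref{thm:faces}, $X$ then also contains the complementary points $\{p_{e_1,v_2},\dots,p_{e_k,v_1}\}$, so the entire cycle face $F_\sigma$ with its $2k$ vertices is contained in $X$. Since $F_\sigma$ is a non-simplex face (a cyclic polytope $C(2k,2k-2)$ with $2k>2k-2+1$ vertices by Proposition~\ref{prop:cycleface}), minimality again forces $X=F_\sigma$. To make this airtight I would take $\sigma$ to be a cycle of \emph{minimal} length among those whose vertex-sets are forced into $X$, guaranteeing that the $v_i$ are distinct and that the hypothesis $d>1$ of Proposition~\ref{prop:cycleface} holds; the parallel-edge situation is the $k=2$ instance.

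The main obstacle I anticipate is the bookkeeping in the first case: one must verify that an induced-$\Gamma_G$ non-completeness really does come paired with the full condition-(i) closure, rather than leaving a smaller non-simplex face strictly inside. Concretely, the delicate point is ensuring that \emph{every} minimal non-simplex face is captured—that there is no third source of non-simpliciality hiding in the interplay between conditions (i) and (ii). The clean way to close this gap is to appeal directly to Theorem~\ref{thm:simplex}: a face fails to be a simplex if and only if one of its two conditions fails, and each failure has been shown to contain a prescribed smaller non-simplex face (a vertex face or a cycle face) inside $X$. Minimality then collapses $X$ onto that face, completing the classification.
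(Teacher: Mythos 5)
Your proposal is correct and takes essentially the same approach as the paper: both arguments rest on Corollary~\ref{cor:edge_graph} together with conditions (i) and (ii) of Theorem~\ref{thm:faces} to show that any non-simplex face must contain the full vertex set of a vertex face or of a cycle face, after which minimality forces equality. The only difference is organizational --- the paper argues the contrapositive (a face containing no vertex face or cycle face has a complete edge graph and no cycle subset, hence is a simplex), whereas you argue directly from the failure of the two conditions of Theorem~\ref{thm:simplex}, being somewhat more explicit about the minimality step and the parallel-edge and minimal-cycle bookkeeping.
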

\begin{proof}
The above propositions imply that the vertex faces are cross-polytopes and the cycle faces are cyclic polytopes which are both simplicial polytopes but not simplices.

So assume that $F$ is a simplicial polytope that does not contain vertices of a vertex face or a cycle face.
Condition (i) in Theorem~\ref{thm:faces} together with the assumption that $F$ does not contain a vertex face now implies that for every non-leaf edge $e=\{v,w\}$ the face $F$ can contain at most one of the vertices $p_e$, $p_{e,v}$, and $p_{e,w}$.
The assumption that $F$ does not contain a cycle face implies that for any pair of parallel edges $e,e'$ between the nodes $v_1,v_2$ the face $F$ can only contain at most one of the vertices $p_{e,v_1}$ and $p_{e',v_2}$.
By Corollary~\ref{cor:edge_graph} this implies that the edge graph of $F$ is a complete graph on its vertices.
Moreover, the same assertion is true for every subset of vertices of the face $F$ which implies that $F$ is combinatorially a simplex.
\end{proof}

\section{Trees}\label{sec:trees}
In this section we investigate the cosmological polytopes associated to the trees. Our main tool is the following proposition which describes how the cosmological polytope $P_G$ changes after adding a leaf to the graph $G$.

\begin{proposition}\label{prop:(bi)pyramid}
Let $G$ be a graph and let $G'$ be the graph that arises from $G$ by adding an vertex $v$ and an edge $e=\{v,w\}$ for some vertex $w$ of $G$. Then the following holds:
\begin{itemize}
    \item[(i)] The cosmological polytope $P_{G'}$ has a facet $F$ containing all vertices except of $p_{e,v}$. In particular, $P_{G'}$ is a pyramid over $F$ with apex $p_{e,v}$.
    \item[(ii)] The facet $F$ is a bipyramid over $P_G$ with apices $p_e$ and $p_{e,w}$ with the interval between $p_e, p_{e,w}$ intersecting $P_G$ in the interior of the vertex face $F_w$ defined in Proposition~\ref{prop:vertexface}.
\end{itemize}
\end{proposition}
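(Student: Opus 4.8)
The plan is to handle the two parts separately, extracting (i) directly from the facet classification of Theorem~\ref{thm:facets} and then studying the affine position of the two new vertices for (ii). For part (i) I would simply name the relevant subgraph: take $H=(\{v\},\emptyset)$, the connected subgraph consisting of the new vertex alone. Since $v$ is a leaf of $G'$ whose only incident edge $e=\{v,w\}$ runs to $w\notin V_H$, Theorem~\ref{thm:facets} says that $F_H$ omits no vertex $p_{e'}$ (as $E_H=\emptyset$) and exactly the single vertex $p_{e,v}$ among those of the form $p_{e',v'}$ with $v'\in V_H$, $w'\notin V_H$. Hence $F:=F_H$ contains every vertex of $P_{G'}$ except $p_{e,v}$. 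As $F$ is a facet we have $p_{e,v}\notin\operatorname{aff}(F)$, and because $p_{e,v}$ is the only vertex of $P_{G'}$ off $F$, the equality $P_{G'}=\conv(F\cup\{p_{e,v}\})$ exhibits $P_{G'}$ as a pyramid over $F$ with apex $p_{e,v}$.

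For part (ii) the first observation is that $F$ has vertex set $V(P_G)\cup\{p_e,p_{e,w}\}$: the vertices of $P_{G'}$ are those of $P_G$ together with the three points $p_e,p_{e,v},p_{e,w}$ of the new edge, and part (i) has deleted only $p_{e,v}$. The key computation is the midpoint identity
\[
\tfrac12\bigl(p_e+p_{e,w}\bigr)=\tfrac12\bigl((-\mathbf{y}_e+\mathbf{x}_v+\mathbf{x}_w)+(\mathbf{y}_e-\mathbf{x}_v+\mathbf{x}_w)\bigr)=\mathbf{x}_w,
\]
which places the centre of the segment $[p_e,p_{e,w}]$ inside $\operatorname{aff}(P_G)$. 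From $p_{e,w}=2\mathbf{x}_w-p_e$ I then get $\operatorname{aff}(F)=\operatorname{aff}(P_G\cup\{p_e\})$, so $\operatorname{aff}(P_G)$ has codimension one in $\operatorname{aff}(F)$. The linear form reading off the $\mathbf{x}_v$-coordinate vanishes on $\operatorname{aff}(P_G)$ but equals $+1$ on $p_e$ and $-1$ on $p_{e,w}$; it therefore cuts $\operatorname{aff}(P_G)$ out of $\operatorname{aff}(F)$ as a hyperplane and certifies that the two new vertices sit strictly on opposite sides of it. This is precisely the data displaying $F$ as a bipyramid over $P_G$ with apices $p_e$ and $p_{e,w}$.

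It remains to locate where the axis $[p_e,p_{e,w}]$ meets $P_G$, and here I would quote Proposition~\ref{prop:vertexface}: translating by $-\mathbf{x}_w$ turns $F_w$ into the centrally symmetric cross-polytope $\conv\{\pm(\mathbf{y}_{e_i}-\mathbf{x}_{w_i})\}$ whose centre is the origin, so the crossing point $\mathbf{x}_w$ lies in the relative interior of $F_w$. I expect this to be the genuinely delicate point, and the reason the statement singles out $F_w$: the point $\mathbf{x}_w$ is \emph{not} interior to $P_G$ but only to the proper face $F_w$, so $F$ is a bipyramid in the geometric sense (the convex hull of $P_G$ with two points strictly on opposite sides of its affine hull) rather than a combinatorial bipyramid in the naive sense. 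The care needed is thus to check that the axis meets $P_G$ exactly along the relative interior of $F_w$, which the explicit cross-polytope description in Proposition~\ref{prop:vertexface} supplies, while resisting any claim about the face lattice of an ordinary bipyramid.
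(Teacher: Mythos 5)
Your proof is correct and takes essentially the same approach as the paper: part (i) invokes Theorem~\ref{thm:facets} for the one-vertex subgraph $\{v\}$, and part (ii) splits the vertices of $F$ into three layers by a linear functional (you read off the $x_v$-coordinate, the paper the $y_e$-coordinate, which is equivalent) and identifies the crossing point of the axis $[p_e,p_{e,w}]$ with $\mathbf{x}_w$. Your explicit justification that $\mathbf{x}_w$ lies in the relative interior of $F_w$ --- being the centre of the cross-polytope description from Proposition~\ref{prop:vertexface} --- is a worthwhile detail that the paper asserts without proof.
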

\begin{proof}\phantom{a}
\begin{enumerate}
    \item[(i):] By Theorem~\ref{thm:facets} the facet $F$ corresponding to the subgraph $\{v\}$ in $G'$ contains all vertices of $P_{G'}$ except~$p_{e,v}$.
    Thus, $P_{G'}$ is a pyramid over $F$ with apex $p_{e,v}$.
    \item[(ii):] Filtered by the $x_e$ coordinate, the vertices of $F$ come in three layers: The layer $x_e=-1$ contains the vertex $p_{e}$, the layer $x_e=0$ the vertices in $P_G$ and the layer $x_e=1$ the vertex $p_{e,w}$.
    The interval between $p_e$ and $p_{e,w}$ intersects the $x_e=0$ layer in the point $\mathbf{x}_w$ which is in the interior of the face $F_w$.
    This implies the claim.\qedhere
\end{enumerate}
\end{proof}

One corollary of Proposition~\ref{prop:(bi)pyramid} is the computation of the volume of the cosmological polytopes of trees.

\begin{corollary}
Let $G, G'$ be as before, then one has $\Vol(P_{G'})=4\Vol(P_G)$, where $\Vol$ is the  normalized volume. In particular, for any tree $T$ with $e$ edges, the normalized volume of the cosmological polytope $P_T$ equals $4^{e}$.
\end{corollary}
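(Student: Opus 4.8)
The plan is to combine the pyramid-over-bipyramid description in Proposition~\ref{prop:(bi)pyramid} with the standard behaviour of normalized volume under pyramids and bipyramids, and then to run an induction on the number of edges for trees. I recall the two facts I will use. If $B$ is a lattice polytope and $a$ is a lattice point off $\operatorname{aff}(B)$, then the lattice pyramid $\operatorname{Pyr}(B,a)$ satisfies $\Vol(\operatorname{Pyr}(B,a)) = h\cdot \Vol(B)$, where $h$ is the lattice height of $a$ above $\operatorname{aff}(B)$, i.e.\ $h=|\psi(a)-\psi|_{B}|$ for the primitive integral functional $\psi$ that is constant on $\operatorname{aff}(B)$. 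If moreover $B$ lies in a hyperplane cutting a polytope $F$ into two pyramids with apices $a_1,a_2$ on opposite sides (a bipyramid), then $\Vol(F)=\Vol(\operatorname{Pyr}(B,a_1))+\Vol(\operatorname{Pyr}(B,a_2))$. By Proposition~\ref{prop:(bi)pyramid}, $P_{G'}=\operatorname{Pyr}(F,p_{e,v})$ while $F$ is the bipyramid over $P_G$ with apices $p_e,p_{e,w}$. Hence $\Vol(P_{G'}) = h_{\mathrm{pyr}}\cdot(h_1+h_2)\cdot\Vol(P_G)$, and the first assertion reduces to computing the three lattice heights $h_{\mathrm{pyr}},h_1,h_2$.

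First I would compute the bipyramid heights using the coordinate $y_e$. All points of $P_G$ have $y_e=0$, whereas $y_e(p_e)=-1$ and $y_e(p_{e,w})=1$, and $\{y_e=0\}\cap\operatorname{aff}(F)=\operatorname{aff}(P_G)$ by Proposition~\ref{prop:(bi)pyramid}. To see that $y_e$ is the primitive transverse functional on the induced lattice of $\operatorname{aff}(F)$, note that $\mathbf{y}_e-\mathbf{x}_v = p_{e,w}-\mathbf{x}_w$ is a difference of two points of $F$, hence an integral direction of $\operatorname{aff}(F)$, and $y_e(\mathbf{y}_e-\mathbf{x}_v)=1$. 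Thus $y_e$ surjects onto $\Z$ on this lattice, so it is primitive, giving $h_1=h_2=1$ and $\Vol(F)=2\Vol(P_G)$.

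For the apex of the outer pyramid I would use the functional $\phi=x_v+y_e$. Since $v$ is a leaf, Theorem~\ref{thm:facets} applied to the subgraph $H=\{v\}$ shows that $F$ lies in the hyperplane $\{x_v+y_e=0\}$, while $\phi(p_{e,v})=2$. The direction $\mathbf{x}_v-\mathbf{x}_w=\tfrac12(p_{e,v}-p_{e,w})$ is an integral direction of $\operatorname{aff}(P_{G'})$ with $\phi(\mathbf{x}_v-\mathbf{x}_w)=1$, so $\phi$ is primitive on the induced lattice of $\operatorname{aff}(P_{G'})$ and the lattice height of $p_{e,v}$ equals $h_{\mathrm{pyr}}=|2-0|=2$. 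Combining the two computations yields $\Vol(P_{G'})=2\cdot 2\cdot\Vol(P_G)=4\Vol(P_G)$. The passage to trees is then an induction on the number of edges: a single vertex has $P_T=\{\mathbf{x}_v\}$, a point of normalized volume $1=4^0$, and any tree with $e\ge 1$ edges arises from a tree with $e-1$ edges by adding a leaf, so $\Vol(P_T)=4\cdot 4^{e-1}=4^{e}$.

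I expect the main obstacle to be the two primitivity checks, on which the clean factor $4=2\cdot 2$ hinges: the bipyramid heights equal $1$ only because $y_e$ is primitive on the induced lattice of $\operatorname{aff}(F)$, and the pyramid height equals $2$ (not $1$) only because $\phi=x_v+y_e$ is primitive on the induced lattice of $\operatorname{aff}(P_{G'})$ while taking the value $2$ at $p_{e,v}$. Both are settled by exhibiting the explicit integral directions $\mathbf{y}_e-\mathbf{x}_v$ and $\mathbf{x}_v-\mathbf{x}_w=\tfrac12(p_{e,v}-p_{e,w})$ in the respective affine hulls; the second, where a vertex difference is twice a lattice vector, is precisely what produces the factor $2$ in the pyramid height.
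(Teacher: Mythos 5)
Your proof is correct and takes essentially the same route as the paper: the paper likewise computes $\Vol(P_{G'})$ by viewing $F$ as a union of two lattice-height-$1$ pyramids over $P_G$ and $P_{G'}$ as a lattice-height-$2$ pyramid over $F$, giving $\Vol(P_{G'})=2\cdot 2\cdot\Vol(P_G)$. The only difference is that the paper dismisses the height computations with ``one can check,'' whereas you verify them explicitly via the functionals $y_e$ and $x_v+y_e$ together with the primitivity witnesses $\mathbf{y}_e-\mathbf{x}_v$ and $\mathbf{x}_v-\mathbf{x}_w$.
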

\begin{proof}
One can check that the facet $F$ is a union of two pyramids of lattice height $1$  over $P_G$ which shows that $\Vol(F)=2\Vol(P_G)$. Moreover, the cosmological polytope $P_{G'}$ is a pyramid of lattice height $2$ over $F$, so the normalized volume of $P_{G'}$ is computed as 
\[
\Vol(P_{G'})=2\Vol(F)=4\Vol(P_G).\qedhere
\]
\end{proof}

\begin{proposition}\label{prop:upperf}
For a cosmological polytope $P_G$ and a vertex $w\in V(G)$ it holds that $f_{P_{G\setminus w}}(t)$ equals the ``upper $f$-vector'' of the vertex face $F_w$ in $P_G$, that is the $f$-vector of faces containing $F_w$.
\end{proposition}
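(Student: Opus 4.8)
The plan is to prove the stronger statement that the poset of faces of $P_G$ containing $F_w$ is isomorphic, as a graded poset, to the face lattice of $P_{G\setminus w}$, via an order isomorphism that raises dimension by the constant $\dim F_w + 1 = d+1$, where $d$ is the degree of $w$ (equivalently, $P_{G\setminus w}$ is combinatorially the quotient polytope $P_G/F_w$). Granting this, a face of $P_G$ containing $F_w$ of dimension $j$ corresponds to a face of $P_{G\setminus w}$ of dimension $j-d-1$, and summing over $j$ gives precisely the identity of the upper $f$-vector of $F_w$ with $f_{P_{G\setminus w}}(t)$. Throughout, write $e_1=\{w,w_1\},\dots,e_d=\{w,w_d\}$ for the edges at $w$ and partition the vertices of $P_G$ into three types: type A are the vertices $p_{e_i},p_{e_i,w}$ of $F_w$; type B are the outward vertices $p_{e_i,w_i}$; and type C are all remaining vertices, which are exactly the vertices of $P_G$ that are also vertices of $P_{G\setminus w}$.

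I would define the map $\Phi$ on a face $X\supseteq F_w$ by keeping the type-C vertices of $X$ and, for every neighbor $w_i$ that becomes isolated in $G\setminus w$ whose type-B vertex lies in $X$, adjoining the vertex $\mathbf{x}_{w_i}$ of $P_{G\setminus w}$; all type-A and all remaining type-B vertices are discarded. What makes this well defined is a consequence of condition (i) of Theorem~\ref{thm:faces}: for a neighbor $w_i$ that stays non-isolated, the type-B vertex $p_{e_i,w_i}$ lies in $X$ if and only if $X$ already contains $p_f$ and $p_{f,w_i}$ for every other edge $f$ at $w_i$. Thus type-B vertices at non-isolated neighbors are redundant and may be dropped without losing information, whereas at an isolated-becoming neighbor the type-B vertex is a free choice and records the independent vertex $\mathbf{x}_{w_i}$; when $w$ and $w_i$ are joined by several parallel edges, condition (ii) applied to the corresponding $2$-cycles forces these parallel type-B vertices to enter $X$ simultaneously, so they jointly encode the single vertex $\mathbf{x}_{w_i}$.

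The core of the argument is to check, using Theorem~\ref{thm:faces} on both sides, that $X$ satisfies conditions (i) and (ii) for $G$ if and only if $\Phi(X)$ satisfies them for $G\setminus w$, and to build the inverse by restoring $F_w$ together with the type-B vertices prescribed by the redundancy rule. Condition (i) at $w$ holds automatically since $F_w\subseteq X$, and condition (i) at a node $u\neq w$ transfers to $\Phi(X)$ once the type-B redundancy is taken into account, because the extra edge $e_i$ at a neighbor contributes only the (forced or freely chosen) type-B vertex. That $\Phi$ is inclusion-preserving and raises dimension by $d+1$ then follows directly from the construction.

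The main obstacle I anticipate is condition (ii) for cycles of $G$ that pass through $w$. Such a cycle becomes a path in $G\setminus w$ and so has no analogue among the constraints for $G\setminus w$; I expect to show that these constraints are automatically redundant. Concretely, in such a cycle the $w$-end contributes type-A vertices of $F_w$, while the far type-B vertex is present exactly when, by condition (i) at its endpoint, the full type-C vertex-pattern there is present; that pattern then propagates inward along the type-C part of the cycle by repeated use of condition (i) for $G\setminus w$, forcing the conclusion of condition (ii) from its hypothesis. Making this propagation precise for cycles of arbitrary length, and disentangling it from the genuine cycles of $G\setminus w$ and from parallel edges at $w$, is the delicate point on which the proof turns.
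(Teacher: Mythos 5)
Your plan is correct, but it takes a genuinely different route from the paper. The paper argues at the level of facets rather than vertex sets: by Theorem~\ref{thm:facets}, the facets of $P_G$ containing $F_w$ are exactly the facets $F_H$ for connected subgraphs $H$ of $G$ with $w\notin V_H$, and such subgraphs are literally the connected subgraphs of $G\setminus w$; this gives a bijection $\phi$ on facets, which the paper then extends to all faces by sending a face $\sigma\supseteq F_w$ to $\bigcap \phi(F)$, the intersection over all facets $F$ of $P_G$ containing $\sigma$, asserting that this is a dimension-shifting bijection. That route never has to confront cycles through $w$ at all (they are absorbed into the facet description), at the price of leaving the extension step and the dimension count essentially unverified. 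Your vertex-level map $\Phi$, built on Theorem~\ref{thm:faces}, is fully explicit, and the two points you flag as delicate do resolve exactly as you predict: (a) at a neighbor $w_i$ that stays non-isolated, condition (i) applied at $w_i$ together with $p_{e_i}\in F_w\subseteq X$ shows that $p_{e_i,w_i}\in X$ if and only if $X$ contains $p_f,p_{f,w_i}$ for every edge $f$ of $G\setminus w$ at $w_i$, which is your redundancy rule; note that the same application of condition (i) already forces all parallel type-B vertices at $w_i$ simultaneously, so you do not actually need condition (ii) for $2$-cycles there; and (b) for a cycle $w=v_1,v_2,\dots,v_k$ through $w$ with edges $g_1=\{w,v_2\},\dots,g_k=\{v_k,w\}$, the hypothesis $p_{g_k,v_k}\in X$ together with $p_{g_k}\in F_w$ and condition (i) at $v_k$ yields $p_{g_{k-1}},p_{g_{k-1},v_k}\in X$, and descending induction around the cycle, applying condition (i) at $v_{k-1},\dots,v_2$, produces precisely the required vertices $p_{g_i,v_{i+1}}$; hence condition (ii) for cycles through $w$ is implied by condition (i) plus $F_w\subseteq X$ and is indeed redundant, as you hoped. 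The dimension shift by $\deg(w)+1$ then costs nothing extra: both posets are bounded graded posets (an interval in a face lattice, and a face lattice), so any order isomorphism preserves rank. In sum, your approach is longer but yields an explicit combinatorial inverse and the identification of $P_{G\setminus w}$ with the quotient of $P_G$ by $F_w$, whereas the paper's facet-based argument is shorter but sketchier at exactly the points your construction makes transparent.
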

\begin{proof}
Consider the partition of the vertices of $P_G$ by coordinate $\mathbf{x}_w$:
\begin{description}
    \item[Claim 1] There is a bijection $\phi$ of facets containing $F_w$ in $P_G$ and facets of $P_{G\setminus w}$.
Specifically, the facets of $P_G$ which contain $F_w$ are determined by connected subgraphs $H$ of $G$ which do not contain $w$. Such subgraphs are in bijection with the connected subgraphs of $G\setminus w$ which in turn determines facets of $P_{G\setminus w}$. The map $\phi$ maps a facet of $P_G$ given by a connected subgraph of $G$ that avoids $w$ to the facet of $P_{G\setminus w}$ given by the corresponding connected subgraph of $G\setminus w$.
\item[Claim 2] This bijection extends to a bijection between the faces containing $F_w$ in $P_G$ and all faces of $P_{G\setminus w}$.
Specifically, we consider the following map
\begin{align*}
    \phi: \{ \sigma \supseteq F_w\mid \sigma \mbox{ a face of }P_G \} & \to \{ \tau \mid \tau \mbox{ a face of }P_{G\setminus w} \}\\
    \sigma & \mapsto \tau = \bigcap_{\sigma \subset F, \,F \text{ is a facet of $P_G$}}\phi(F). 
\end{align*}
This map respects the dimension of the faces, that is $\dim (\phi(\sigma))=\dim(\sigma)-\dim(F_w)$.
\item[Claim 3]
This implies that the upper $f$-vector of $F_w$ in $P_G$ equals the (shifted) $f$-vector of $P_{G\setminus w}$.\qedhere
\end{description}
\end{proof}

\begin{definition}
For a polytope $P$, with $f$-vector $(f_{-1},\ldots,f_{\dim P})$ we will define its $f$-polynomial $f_P(t)$ to be 
\[
f_P(t) = \sum_{i=-1}^{\dim P} f_{i}t^{i+1}
\]
\end{definition}

We can use Proposition~\ref{prop:(bi)pyramid} to get a recursive relation for the $f$-polynomial of cosmological polytopes of the graphs $G$ and $G'$.
\begin{theorem}
The $f$-polynomials of $P_G$ and $P_{G'}$ are related in the following way:
\begin{align*}
f_F(t) &= (1+2t)f_{P_G}(t) - t^{\operatorname{deg}(w)+1}(1+t)f_{P_{G\setminus w}}(t)\\
f_{P_{G'}} &= (1 + t)f_F(t)
\end{align*}
\end{theorem}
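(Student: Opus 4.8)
The plan is to derive both identities directly from the geometric structure established in Proposition~\ref{prop:(bi)pyramid}, using the standard behavior of $f$-polynomials under pyramid and bipyramid constructions. The second identity is the easier one: since $P_{G'}$ is a pyramid over the facet $F$ with apex $p_{e,v}$ by part~(i), I would invoke the general fact that for a pyramid $\Pi$ over a base $B$, the faces of $\Pi$ are exactly the faces of $B$ together with the pyramids over those faces (adding the apex), so that $f_\Pi(t) = (1+t)f_B(t)$. Applying this with $\Pi = P_{G'}$ and $B = F$ yields $f_{P_{G'}}(t) = (1+t)f_F(t)$ immediately.

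For the first identity I would analyze $F$ as the bipyramid over $P_G$ with apices $p_e$ and $p_{e,w}$, as in part~(ii). The key point is that $F$ is \emph{not} a standard bipyramid, because the segment joining the two apices meets $P_G$ not in an interior point of the whole polytope but in the interior of the proper subface $F_w$. I would therefore count the faces of $F$ by classifying them according to which of the two apices they contain. Faces containing neither apex are precisely the faces of $P_G$ that do not contain $F_w$, because a face of $P_G$ that fails to contain $F_w$ survives as a face of $F$, whereas a face containing $F_w$ gets ``swallowed'' by the apices (the segment $[p_e, p_{e,w}]$ passes through its relative interior). Faces containing exactly one apex are pyramids over arbitrary faces of $P_G$ over that apex, contributing a factor $t$ each; and faces containing both apices are pyramids over the two apices built on top of faces of $P_G$ that do \emph{not} contain $F_w$ in their relative interior, i.e.\ again the faces not containing $F_w$.

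Assembling these contributions, I expect the generating-function bookkeeping to take the shape
\begin{align*}
f_F(t) = f_{P_G}(t) + 2t\, f_{P_G}(t) + t^2\bigl(f_{P_G}(t) - (\text{faces containing } F_w)\bigr),
\end{align*}
where the correction term accounts exactly for the faces of $P_G$ containing $F_w$, which cannot be doubled up into both-apex faces. By Proposition~\ref{prop:upperf} this upper $f$-vector of $F_w$ is precisely $f_{P_{G\setminus w}}(t)$, but shifted in degree by $\dim F_w = \deg(w)$; tracking the grading carefully should convert the raw count into the factor $t^{\deg(w)+1}(1+t)f_{P_{G\setminus w}}(t)$ appearing in the statement, after combining the $(1+2t)f_{P_G}(t)$ terms from the neither-apex and single-apex cases.

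The main obstacle, and the step demanding the most care, is precisely this degree-shift accounting in the correction term: I must confirm that the faces containing $F_w$ contribute with the correct dimension offset so that the $(1+t)$ factor (reflecting the two apices interacting with an already-degenerate base) and the power $t^{\deg(w)+1}$ (reflecting the codimension of $F_w$ together with the apex shift) emerge exactly as written. I would verify this by pinning down, for each face $\sigma \supseteq F_w$ of $P_G$ of dimension $i$, the dimension of the corresponding face $\phi(\sigma)$ in $P_{G\setminus w}$ via the relation $\dim\phi(\sigma) = \dim\sigma - \deg(w)$ from Proposition~\ref{prop:upperf}, and then checking that such a face produces exactly the overcounted configurations in the bipyramid, weighted correctly. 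A small sanity check against the path-graph recursion $f_{\Pi_{n+2}}(t)=(1+t)((1+2t)f_{\Pi_{n+1}}(t) - t^2(1+t)f_{\Pi_n}(t))$ quoted in the introduction (where $\deg(w)=1$) would confirm the exponent.
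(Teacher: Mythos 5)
Your overall route---the pyramid formula for the second identity plus a direct face analysis of the non-generic bipyramid, with Proposition~\ref{prop:upperf} supplying the correction term---is in spirit the same as the paper's; the paper merely outsources the bipyramid analysis to McMullen's subdirect-sum description $F=(I,I)\oplus(F_w,P_G)$. However, your classification of the faces of $F$ is wrong in two of the three cases, and the displayed bookkeeping formula cannot be repaired by ``tracking the grading.'' Let $c$ be the point where the segment $[p_e,p_{e,w}]$ pierces the relative interior of $F_w$. If $\sigma$ is a face of $P_G$ with $F_w\subseteq\sigma$, then $c\in\sigma$, and any supporting hyperplane of $F$ containing $\sigma$ contains $c$, hence contains \emph{both} apices. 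Consequently: (a) faces of $F$ containing exactly one apex are pyramids over faces of $P_G$ \emph{not} containing $F_w$, not over arbitrary faces as you claim; (b) faces of $F$ containing both apices are exactly $\conv(\sigma\cup\{p_e,p_{e,w}\})$ for faces $\sigma\supseteq F_w$---the opposite of your claim---and such a face has dimension $\dim\sigma+1$, not $\dim\sigma+2$, because the two apices are collinear with $c\in\operatorname{aff}(\sigma)$. Writing $u(t)$ for the generating polynomial of the faces of $P_G$ containing $F_w$, the correct count is therefore $f_F(t)=(1+2t)\bigl(f_{P_G}(t)-u(t)\bigr)+t\,u(t)=(1+2t)f_{P_G}(t)-(1+t)u(t)$, with $u(t)=t^{\deg(w)+1}f_{P_{G\setminus w}}(t)$ by Proposition~\ref{prop:upperf}. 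Your ansatz $(1+2t+t^2)f_{P_G}(t)-t^2(\cdots)$ has the wrong coefficient structure (note also that its first term $f_{P_G}(t)$ contradicts your own, correct, statement that the neither-apex faces are only those not containing $F_w$). A minimal check: for a bipyramid over a triangle whose apex segment pierces the relative interior of an edge, the $f$-polynomial is $1+5t+8t^2+5t^3+t^4$, which agrees with $(1+2t)f_P(t)-(1+t)(t^2+t^3)$ but not with your formula, which gives $1+5t+10t^2+10t^3+4t^4$.

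A further point where your deferred ``degree-shift accounting'' would actually go wrong: the relation you quote, $\dim\phi(\sigma)=\dim\sigma-\deg(w)$, yields $u(t)=t^{\deg(w)}f_{P_{G\setminus w}}(t)$ and hence the exponent $t^{\deg(w)}$, not $t^{\deg(w)+1}$. The bijection of Proposition~\ref{prop:upperf} sends $F_w$ itself to the \emph{empty} face of $P_{G\setminus w}$, so the correct shift is $\dim\phi(\sigma)=\dim\sigma-\deg(w)-1$ (the shift as stated in the paper is itself off by one); with this shift $u(t)=\sum_{\sigma\supseteq F_w}t^{\dim\sigma+1}=t^{\deg(w)+1}f_{P_{G\setminus w}}(t)$ and the theorem follows. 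In summary, your strategy is viable and close to the paper's, but the case analysis and the dimension accounting are not details to be confirmed later---they are the actual content of the proof, and as written both would fail.
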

\begin{proof}
Indeed, the $f$-polynomial of a pyramid $P$ with base $F$ is given by $f_P=(1+t)f_F(t)$ as every face of $F$ of dimension $d$ produces two faces of $P$ one of dimensions $d$ and another of dimension $d+1$. The $f$-polynomial of a the generic bipyramid over a polytope $P_G$ can be computes as $(1+2t)f_{P_G}(t)$. The description of faces of non-generic bipyramid follows, for example, from \cite[Proposition 2.3]{McMullen} as it is a particular example of subdirect sum: $F=(I,I)\oplus(F_v,P_G)$, where $I$ is an interval, and $F_v$ is a vertex face corresponding to the node $v$ of $G$. The face count involves the correction of $(1+2t)f_{P_G}(t)$ by the generating polynomial of the number of faces of $P_G$ containing $F_v$. Using Proposition~\ref{prop:upperf}, we obtain
\[
f_F(t) = (1+2t)f_{P_G}(t) - t^{\operatorname{deg}(w)+1}(1+t)f_{P_{G\setminus w}}(t),
\]
which finishes the proof.
\end{proof}

This gives an inductive way of computing the $f$-vector of cosmological polytopes of trees.
In the case of paths this yields the following recursion for their $f$-polynomials.

\begin{corollary}
Let $\Pi_n$ be the path graph on $n$ vertices.
Then we have the following recursion for the $f$-vector $f_{\Pi_n}(t)$ of the cosmological polytopes $P_{\Pi_n}$:
\[
    f_{\Pi_{n+2}}(t)=(1+t)((1+2t)f_{\Pi_{n+1}}(t) - t^2(1+t)f_{\Pi_{n}}(t)),
\]
and $f_{\Pi_1}(t)=t+1$, $f_{\Pi_2}(t)=t^3+3t^2+3t+1$.
The number of all faces of $P_{\Pi_n}$ is the evaluation of this recursion at $t=1$ which is the sequence \href{https://oeis.org/A154626}{A154626} in the Online Encyclopedia of Integer Sequences (OEIS).
\end{corollary}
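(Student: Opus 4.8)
The plan is to obtain the recursion as a direct specialization of the preceding theorem and then to reduce the counting statement to a scalar linear recurrence. First I would set $G=\Pi_{n+1}$ and let $G'=\Pi_{n+2}$ be the path obtained by attaching a new leaf $v$ at one endpoint $w$ of $\Pi_{n+1}$, with $e=\{v,w\}$. Since $w$ is an endpoint of a path, $\deg(w)=1$ in $\Pi_{n+1}$, so the correction exponent $\deg(w)+1$ equals $2$; moreover deleting the endpoint $w$ from $\Pi_{n+1}$ leaves the path $\Pi_n$, i.e.\ $\Pi_{n+1}\setminus w=\Pi_n$. Substituting these two identifications into the two displayed relations of the preceding theorem gives
\[
f_F(t)=(1+2t)f_{\Pi_{n+1}}(t)-t^2(1+t)f_{\Pi_n}(t),\qquad f_{\Pi_{n+2}}(t)=(1+t)f_F(t),
\]
and combining them yields exactly the asserted recursion.

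Next I would verify the two base cases by direct inspection of the small polytopes. The graph $\Pi_1$ is a single vertex with no edges, so by Definition~\ref{def:cosm} the polytope $P_{\Pi_1}$ is the single point $\mathbf{x}_v$; a point has $f$-vector $(1,1)$, giving $f_{\Pi_1}(t)=1+t$. The graph $\Pi_2$ is a single edge $e=\{v,w\}$, and by the Remark following Definition~\ref{def:cosm} the lattice points $\mathbf{x}_v,\mathbf{x}_w$ lie in $\conv(p_e,p_{e,v},p_{e,w})$, so $P_{\Pi_2}$ is the triangle on $p_e,p_{e,v},p_{e,w}$; its $f$-polynomial is $(1+t)^3=t^3+3t^2+3t+1$, as claimed.

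Finally, for the counting statement I would set $a_n:=f_{\Pi_n}(1)$, the total number of faces of $P_{\Pi_n}$ (including the empty face and the whole polytope). Evaluating the recursion at $t=1$ collapses it to the scalar linear recurrence $a_{n+2}=6a_{n+1}-4a_n$, with initial values $a_1=2$ and $a_2=8$ read off from the base cases. Computing the first terms gives $2,8,40,208,1088,\dots$, and I would conclude by checking that this recurrence together with these initial values agrees with the sequence A154626 in the OEIS.

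I do not expect a genuine obstacle here, since the recursion is merely a specialization of an already-established theorem; the only points requiring care are the correct reading of the attaching data for a path---namely that an endpoint has degree $1$ (so the correction term carries $t^2$) and that deleting that endpoint returns $\Pi_n$---together with the elementary identification of the two base polytopes as a point and a triangle.
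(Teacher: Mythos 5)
Your proposal is correct and follows exactly the route the paper intends: the corollary is stated as an immediate specialization of the preceding theorem, obtained by attaching a leaf at an endpoint $w$ of $\Pi_{n+1}$ (so $\deg(w)=1$ gives the exponent $2$, and $\Pi_{n+1}\setminus w=\Pi_n$), together with the identification of $P_{\Pi_1}$ as a point and $P_{\Pi_2}$ as a triangle for the base cases. Your verification that the $t=1$ evaluation satisfies $a_{n+2}=6a_{n+1}-4a_n$ with $a_1=2$, $a_2=8$, matching A154626, completes the details the paper leaves implicit.
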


\section{Counting faces}\label{sec:counting}
In this section we use our general description of faces of cosmological polytopes to compute their number in certain examples.
To simplify the exposition and obtain closed formulae for the face numbers, we assume that the graph $G$ does not have loops throughout this section. 

\subsection{Low-dimensional faces}
The main result of this subsection are formulas for the number of edges and $2$-dimensional faces of cosmological polytopes.
\begin{theorem}\label{thm:counting_low_dim}
Let $G=(V,E)$ be an undirected graph where $e$ is the number of edges, $l$ the number of leaves, $v_2$ the number of vertices of degree $2$, and $\Delta_i$ the number of cycles of length $i$ in $G$. The characterization of faces then yields:
\begin{enumerate}
    \item The number of edges of the cosmological polytope $P_G$ is
    \[
        f_1(P_G) = \binom{3e}{2} - 2e + l - 2\Delta_2,
    \]
    \item For a simple graph $G$, the number of $2$-dimensional faces of the cosmological polytope $P_G$ is:
    \[
        f_2(P_G) = 27\binom{e}{3} + 3(e+l)(e-1) + v_2 - 2\Delta_3,
     \]
\end{enumerate}
\end{theorem}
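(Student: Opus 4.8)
My plan is to count each type of face directly using the combinatorial criteria from Theorem~\ref{thm:faces} and Corollary~\ref{cor:edge_graph}, organizing the count by how many vertices of $P_G$ the face uses. Recall the vertices of $P_G$ come in two families: the $e$ vertices $p_{e'}$ (one per edge) and the $2e$ vertices $p_{e',v'}$ (two per edge). For the edge count $f_1(P_G)$, I would start from the complete graph on all $3e$ vertices of $P_G$, giving $\binom{3e}{2}$ candidate edges, and then subtract those pairs that are \emph{not} edges of $\Gamma_G$ according to Corollary~\ref{cor:edge_graph}. The removed pairs are of type $\{p_{e'},p_{e',v'}\}$ with $v'$ a non-leaf (each edge contributes two such pairs, one for each endpoint, but only if that endpoint is a non-leaf), and type $\{p_{e',v_1},p_{e'',v_2}\}$ for parallel edges. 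So the plan is: count the type-(1) removals as $\sum_{e'=\{v,w\}}(\mathbf{1}[v\text{ non-leaf}] + \mathbf{1}[w\text{ non-leaf}])$, which rearranges to $2e - l$ (each edge has two endpoints, and exactly the leaf-endpoints are excluded), and count type-(2) removals as $2\Delta_2$ (each pair of parallel edges, i.e.\ a $2$-cycle, contributes the removed pairs). Combining gives the stated formula $\binom{3e}{2} - (2e - l) - 2\Delta_2$.

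For $f_2(P_G)$ with $G$ simple, I would classify triangular $2$-faces by the number of distinct edges of $G$ their vertices touch. By Theorem~\ref{thm:simplex}, a $3$-element subset $X$ is a (triangular) $2$-face exactly when it induces a complete subgraph of $\Gamma_G$ and contains no cycle-forbidden configuration (condition (ii)), which for size $3$ only rules out $2$-cycles and $3$-cycles. I would split into three cases. First, subsets whose three vertices lie on three \emph{distinct} edges of $G$: there are $\binom{e}{3}$ triples of edges, each offering $3^3 = 27$ choices of vertex (three choices per edge), and I expect the dominant term $27\binom{e}{3}$ to arise here, with a correction $-2\Delta_3$ removing the triangle configurations forbidden by condition (ii) of Theorem~\ref{thm:simplex} (each $3$-cycle $\sigma$ excludes the two orientations $\{p_{e_i,v_i}\}$ and $\{p_{e_i,v_{i+1}}\}$). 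Second, subsets using exactly two distinct edges $e',e''$: here one edge supplies two of the three vertices and the other supplies one; I would count the valid (complete-in-$\Gamma_G$) configurations per ordered choice and show this contributes the $3(e+l)(e-1)$ term. Third, subsets using a single edge: an edge has only three vertices $p_{e'},p_{e',v},p_{e',w}$, and these form a triangle of $P_G$ precisely when the edge is a \emph{leaf} edge (so that $v$ or $w$ is a leaf, avoiding the type-(1) $\Gamma_G$-nonedge) or, for the $v_2$ term, via degree-$2$ vertices; I would track which single-edge and two-edge configurations survive to produce the $+v_2$ summand.

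The main obstacle I anticipate is the careful bookkeeping in the two-edge case: when two vertices come from one edge $e'$, the pair $\{p_{e'},p_{e',v}\}$ is itself an edge of $\Gamma_G$ only when $v$ is a leaf, so whether a given two-edge triple is complete in $\Gamma_G$ depends delicately on the leaf/non-leaf status of the shared endpoints, and this is exactly where the $(e+l)$ factor (as opposed to a plain $e$) must come from. I would handle this by fixing the ordered data (which edge contributes two vertices, which pair of its vertices, and which adjacent edge contributes the third), counting compatible configurations while enforcing the $\Gamma_G$-completeness and the no-$2$-cycle condition, and then verifying that the $\Delta_2$ contributions already subtracted in the $f_1$ computation do not reappear (using simplicity of $G$ to guarantee $\Delta_2 = 0$, which simplifies the two-edge analysis). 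Finally I would cross-check the total against small examples, such as the path graphs $\Pi_n$ via the recursion in Section~\ref{sec:trees} and the prism of the running example, to confirm the constants $27$, $3$, $1$, and $2$ are correct.
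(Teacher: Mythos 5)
Your treatment of part (1) and of the triangular faces in part (2) follows the paper's argument: reduce everything to counting edges, respectively complete size-$3$ subgraphs, of $\Gamma_G$ via Corollary~\ref{cor:edge_graph}, and correct by $2\Delta_3$ for the two forbidden orientations of each $3$-cycle coming from condition (ii) of Theorem~\ref{thm:simplex}. The genuine gap is in where you expect the $+v_2$ term to come from. You classify only \emph{triangular} $2$-faces, i.e.\ $3$-element vertex sets, and propose that the $v_2$ summand will emerge from the single-edge and two-edge configurations. It cannot: a $3$-element set supported on a single edge $e'=\{v,w\}$, namely $\{p_{e'},p_{e',v},p_{e',w}\}$, induces a complete subgraph of $\Gamma_G$ precisely when \emph{both} endpoints are leaves (so $e'$ is an isolated-edge component), which has nothing to do with degree-$2$ vertices; and the two-edge configurations are exactly the $3(e+l)(e-1)$ term you already have. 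No amount of bookkeeping over $3$-element subsets can produce $v_2$.

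What is missing is the observation that $2$-dimensional faces of $P_G$ need not be simplices. By Theorem~\ref{thm:faces}, together with the special faces of Section~\ref{sec:special_faces}, every $2$-face is a triangle or a quadrilateral, and the quadrilaterals are precisely the vertex faces $F_v$ of Proposition~\ref{prop:vertexface} for nodes $v$ of degree $2$ --- these are $2$-dimensional cross-polytopes on the \emph{four} vertices $p_{e_1},p_{e_1,v},p_{e_2},p_{e_2,v}$ --- together with cycle faces of $2$-cycles, which are absent since $G$ is simple. This is the source of the $+v_2$: it counts quadrilateral faces, one per degree-$2$ node, and it must be added as a separate case rather than recovered from the triangle count. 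Your plan, restricted to triangles, yields $27\binom{e}{3}+3(e+l)(e-1)-2\Delta_3$ (plus a contribution from isolated edges, which the paper's formula tacitly ignores), and the proof is completed by adjoining the quadrilateral count $v_2$, not by further analysis of one- and two-edge triples.
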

\begin{proof}
The first part follows directly from Corollary~\ref{cor:edge_graph}. Indeed, the number of edges of the complete graph on $3e$ vertices is $\binom{3e}{2}$; the number of removed edges of type $\{p_{e,v},p_e\}$ for each non-leaf node $v$ is $2e-l$ and the number of edges $p_{e,v_1},p_{e',v_2}$ for a pair of parallel edges $e, e'$ between the nodes $v_1$ and $v_2$ is $2\Delta_2$.

The second part is deduced similarly. By Corollary~\ref{cor:minnonsim} and the discussion thereafter, there are only two types of faces of dimension 2: triangles and quadrilaterals. First let us count the number of triangular faces of $P_G$. For this we first count the number of complete subgraphs of size 3 of the edge graph $\Gamma_G$ of $P_G$. Since $G$ is simple, from the description of Corollary~\ref{cor:edge_graph} it follows that the number of complete subgraphs of size 3 of $\Gamma_G$ is $27\binom{e}{3} + 3(e+l)(e-1)$. Now for every cycle of length 3 in $G$, there are exactly 2 complete subgraphs in $\Gamma_G$ which do not satisfy condition~(ii) of Theorem~\ref{thm:simplex}. So the final count of triangles in a simple graph is given by $27\binom{e}{3} + 3(e+l)(e-1)- 2\Delta_3$. On the other hand, each quadrilateral is either a vertex face of a node of degree $2$, or a cycle face for a cycle of length $2$. However, in a simple graph there is no cycle of length $2$, hence the total number of $2$-dimensional faces is given by 
\[
27\binom{e}{3} + 3(e+l)(e-1) - 2\Delta_3 + v_2. \qedhere
\]
\end{proof}

It is possible to deduce a formula for the number of $2$-dimensional faces of $P_G$ for a general graph $G$. For this one has to take into account contributions coming from the banana subgraphs (the graph on two vertices with multiple parallel edges). In particular, one has to compute the number of 2-dimensional faces of banana graphs which we do in Example~\ref{ex:banana}. It amounts to careful bookkeeping to deduce the general formula for the number of 2-faces of $P_G$ from the general description of simplex faces in Theorem~\ref{thm:simplex} and the count in Example~\ref{ex:banana}.

\begin{example}\label{ex:banana}
Let $B_k$ be the banana graph consisting of two vertices and $k$ parallel edges between them for $k\ge 1$.
The cosmological polytopes $P_{B_1}$ and $P_{B_2}$ have one and five $2$-dimensional faces, respectively.
In general, for $k\ge 3$ we have
\[
    f_2(P_{B_k})=15\binom{k}{3}+3\binom{k}{2}.
\]
One way to prove this formula is as follows.
Using for example \texttt{polymake}~\cite{polymake} one can compute that $P_{B_2}$ has two triangles and $P_{B_3}$ has 21 triangles.
Since every triangle can involve vertices corresponding to at most three edges of $B_k$ this immediately yields that $P_{B_k}$ has $15\binom{k}{3}+2\binom{k}{2}$ many triangles for $k\ge 2$.
By Theorem~\ref{thm:faces}, we obtain that every quadrilateral of $P_{B_k}$ for $k\ge 3$ is a cycle face stemming from a cycle of length two in $B_k$.
The general formula now follows from the fact that there are $\binom{k}{2}$ many such cycles in $B_k$.
\end{example}

\subsection{Simplex faces}\label{sec:counting_simplex}
In this subsection we count simplex faces in particular graphs.
For a polytope $P$ we denote by $f_{k}^\Delta(P)$ the number of $k$-dimensional simplex faces of $P$.

\begin{proposition}
Let $C_n$ be the cycle graph on $n$ nodes.
Then for $1\le k\le 2n$ the cosmological polytope $P_{C_n}$ has
\begin{equation}\label{eq:cycle}
    f_{k-1}^\Delta(P_{C_n})=-2\binom{n}{k-n}+\sum_{i=0}^{\lfloor \frac{k}{2}\rfloor}\binom{n}{i}\binom{n-i}{k-2i}3^{k-2i}.
\end{equation}
In total, $P_{C_n}$ has $5^n-2^{n+1}$ many simplex faces.
\end{proposition}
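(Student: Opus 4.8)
\emph{The plan is to} apply the simplex-face criterion of Theorem~\ref{thm:simplex} directly, turning the count into an elementary generating-function computation. First I would record the structure of the edge graph $\Gamma_{C_n}$ from Corollary~\ref{cor:edge_graph}: since $C_n$ is a simple cycle ($n\ge 3$), every node has degree two and no two edges are parallel, so the only non-edges of $\Gamma_{C_n}$ are the pairs $\{p_{e_i},p_{e_i,v_i}\}$ and $\{p_{e_i},p_{e_i,v_{i+1}}\}$ inside each of the $n$ triples $T_i=\{p_{e_i},p_{e_i,v_i},p_{e_i,v_{i+1}}\}$ attached to the edge $e_i=\{v_i,v_{i+1}\}$. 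In particular any two vertices lying in distinct triples are adjacent in $\Gamma_{C_n}$, so condition (i) of Theorem~\ref{thm:simplex} \emph{decouples over the triples}: a set $X$ induces a complete subgraph exactly when, inside each $T_i$, the intersection $X\cap T_i$ is one of the five allowed subsets $\emptyset$, $\{p_{e_i}\}$, $\{p_{e_i,v_i}\}$, $\{p_{e_i,v_{i+1}}\}$, $\{p_{e_i,v_i},p_{e_i,v_{i+1}}\}$ (the two pairs containing $p_{e_i}$ being forbidden). This already yields $5^n$ sets satisfying (i) and explains the leading term.

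\emph{Next I would encode condition (ii).} As the only cycle in $C_n$ is the full $n$-cycle, traversed in its two directions, condition (ii) forbids precisely the two oriented sets $A^{+}=\{p_{e_i,v_i}:1\le i\le n\}$ and $A^{-}=\{p_{e_i,v_{i+1}}:1\le i\le n\}$ (indices mod $n$) from being contained in $X$. Hence a set satisfying (i) is a simplex face iff $A^{+}\not\subseteq X$ and $A^{-}\not\subseteq X$. I would count the bad sets by inclusion–exclusion: a set satisfying (i) contains $A^{+}$ iff each $X\cap T_i$ is one of the two allowed subsets through $p_{e_i,v_i}$, giving $2^n$ sets, and likewise $2^n$ sets contain $A^{-}$; a set contains both $A^{+}$ and $A^{-}$ iff every $X\cap T_i=\{p_{e_i,v_i},p_{e_i,v_{i+1}}\}$, forcing the single all-$y$ set. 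Thus the number of sets satisfying (i) but violating (ii) is $2^n+2^n-1=2^{n+1}-1$. Subtracting these from the $5^n$ sets satisfying (i) and discarding the empty set (the empty face) gives the total number of nonempty simplex faces as $(5^n-1)-(2^{n+1}-1)=5^n-2^{n+1}$, which is the claimed count.

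\emph{To obtain the graded formula}~\eqref{eq:cycle} I would track $k=|X|$, using that a simplex face with $k$ vertices has dimension $k-1$. Weighting each triple by $|X\cap T_i|$ gives the local generating function $1+3t+t^2$ (one empty choice, three singletons, one pair), so the number of sets satisfying (i) with $|X|=k$ is the coefficient of $t^k$ in $(1+3t+t^2)^n$; expanding by the multinomial theorem and rewriting $\tfrac{n!}{i!\,(k-2i)!\,(n-k+i)!}=\binom{n}{i}\binom{n-i}{k-2i}$ produces exactly $\sum_{i=0}^{\lfloor k/2\rfloor}\binom{n}{i}\binom{n-i}{k-2i}3^{k-2i}$. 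For the correction, the sets of size $k$ containing $A^{+}$ are counted by the coefficient of $t^k$ in $t^{n}(1+t)^{n}$, namely $\binom{n}{k-n}$, and symmetrically for $A^{-}$, yielding the term $-2\binom{n}{k-n}$; the only set containing both orientations is the all-$y$ set of size $2n$, so the overlap correction is needed only at the top degree $k=2n$, where the corresponding simplex face does not exist, and away from that endpoint the graded count agrees with~\eqref{eq:cycle}.

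\emph{I expect the main obstacle} to be bookkeeping rather than conceptual. The two points requiring care are: verifying that \emph{cross-triple completeness} in $\Gamma_{C_n}$ holds for every pair of vertices in distinct triples, which is what makes condition (i) factor as a product over triples (this is where simplicity of $C_n$, hence the absence of type-(2) non-edges from Corollary~\ref{cor:edge_graph}, is essential, and where the cases $n\le 2$ must be checked by hand); and applying inclusion–exclusion correctly at the extreme degree $k=2n$, where the two forbidden orientations $A^{+}$ and $A^{-}$ collide on the single all-$y$ set. Once these are settled, both the graded formula and the total follow by summing coefficients, using that $(1+3t+t^2)^n$ evaluates to $5^n$ at $t=1$ and that $\sum_{k}\binom{n}{k-n}=2^n$.
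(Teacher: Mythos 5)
Your proposal is correct and follows essentially the same route as the paper: decompose the edge graph of $P_{C_n}$ into the $n$ triples $T_i$, count complete subgraphs via the five allowed local choices (equivalently your generating factor $1+3t+t^2$), and remove the two oriented cycle sets $A^{+},A^{-}$ by inclusion--exclusion, giving $(5^n-1)-(2^{n+1}-1)=5^n-2^{n+1}$ in total. In fact your handling of the top degree is more careful than the paper's: the displayed formula~\eqref{eq:cycle} omits the overlap term $+1$ at $k=2n$ (there it evaluates to $1-2=-1$, while the true count of $(2n-1)$-dimensional simplex faces is $0$), which is precisely why summing~\eqref{eq:cycle} over $1\le k\le 2n$ yields $5^n-2^{n+1}-1$ rather than the stated (and correct) total $5^n-2^{n+1}$; your inclusion--exclusion at $k=2n$ restores the consistency.
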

\begin{proof}
Say the cycle graph $C_n$ has nodes $v_1,\dots,v_n$ and edges $e_1,\dots,e_n$ where $e_i=\{v_i,v_{i+1}\}$ for $i\in \Z/n\Z$.
The edge graph $\Gamma_{P_{C_n}}$ of $P_{C_n}$ consists of $n$ triples of vertices $p_{e_i}$, $p_{e_i,v_i}$, and $p_{e_i,v_{i+1}}$ where within a cluster only the last two vertices are joined by an edge in $\Gamma_{P_{C_n}}$ and all pairs of vertices between different clusters are joined by an edge.
Therefore, $\binom{n}{i}\binom{n-i}{k-2i}3^{k-2i}$ is the number of complete subgraphs of $\Gamma_{P_{C_n}}$ on $k$ vertices with exactly $i$ edges within an edge cluster as described above for $0\le i\le \lfloor \frac{k}{2}\rfloor$.
Hence, the sum in Equation~\eqref{eq:cycle} is the number of all complete subgraphs of $\Gamma_{P_{C_n}}$ on $k$ vertices.

By Theorem~\ref{thm:simplex} we need to exclude the complete subgraphs of $\Gamma_{P_{C_n}}$ that contain either all vertices $p_{e_i,v_i}$ or all vertices $p_{e_i,v_{i+1}}$ for $i \in \Z/n\Z$.
For $n\le k\le 2n$ there are exactly $2\binom{n}{k-n}$ such complete subgraphs which is the term we subtract in Equation~\eqref{eq:cycle}.

For the last statement, note that when we want to count all complete subgraphs of $\Gamma_{P_{C_n}}$ we have five choices for each of the $n$ vertex clusters: no vertex, one of the vertices $p_{e_i}$, $p_{e_i,v_i}$, and $p_{e_i,v_{i+1}}$ or both of the vertices $p_{e_i,v_i}$ and $p_{e_i,v_{i+1}}$.
We just need to exclude the choice of no vertices at all, which yields $5^n-1$ complete subgraphs in total.
We claim that for the complete subgraphs we need to exclude by the cycle condition exactly $2^{n+1}-1$ choices.
Indeed, once a complete subgraphs contains all vertices $p_{e_i,v_i}$ for $i \in \Z/n\Z$ then there are two choices in every cluster: the complete subgraph can contain the vertex $p_{e_i,v_{i+1}}$ or not. After counting the analogous possibilities for the complete subgraphs containing all vertices $p_{e_i,v_{i+1}}$ for $i \in \Z/n\Z$, we obtain the term $2^{n+1}-1$ we need to subtract since there is one configuration that appears in both of these versions.
\end{proof}

A similar argument yields the following generalization.
\begin{proposition}
Let $G=(V,E)$ be a graph with exactly one cycle. Say this cycle is of length $d$ and assume $d>2$.
Let $e=|E|$ and $l$ be the number of leaves of $G$.
Then the number simplex faces of $P_G$ is
\[
    6^l\cdot 5^{e-l-d}\cdot(5^d-2^{d+1}+1)-1.
\]
\end{proposition}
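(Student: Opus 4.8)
The plan is to reuse the clique-counting strategy from the preceding proposition on $C_n$, reorganized according to the three kinds of edges that occur in $G$. Since $G$ has exactly one cycle and it has length $d>2$, the graph has no pair of parallel edges (these would constitute a $2$-cycle) and no loops by the standing assumption of this section. Consequently the only edges removed from the complete graph in Corollary~\ref{cor:edge_graph} are of the first type, $\{p_e,p_{e,v}\}$ for a non-leaf node $v$; in particular there are no removed edges between distinct edge-clusters $\{p_e,p_{e,v},p_{e,w}\}$. Thus a subset $X\subseteq V(P_G)$ induces a complete subgraph of $\Gamma_G$ if and only if its restriction to every cluster is a clique, and these cluster choices are completely independent of one another.

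First I would classify the edges of $G$ and record the number of admissible clique-choices in each cluster. The $d$ edges of the unique cycle $\sigma$ have both endpoints of degree at least $2$, hence are non-leaf edges; for such an edge $e=\{v,w\}$ both $\{p_e,p_{e,v}\}$ and $\{p_e,p_{e,w}\}$ are missing, so the clique-choices in the cluster are $\emptyset,\{p_e\},\{p_{e,v}\},\{p_{e,w}\},\{p_{e,v},p_{e,w}\}$, giving $5$ possibilities. For a leaf edge $e=\{v,w\}$ with $w$ the leaf, only $\{p_e,p_{e,v}\}$ is missing, so the cluster induces the path $p_e-p_{e,w}-p_{e,v}$ and admits the $6$ clique-choices $\emptyset,\{p_e\},\{p_{e,v}\},\{p_{e,w}\},\{p_e,p_{e,w}\},\{p_{e,v},p_{e,w}\}$. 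Writing the edge set as the disjoint union of the $l$ leaf edges, the $d$ cycle edges, and the remaining $e-l-d$ non-leaf non-cycle edges (each of the latter again contributing $5$ choices), every complete subgraph of $\Gamma_G$ arises from an independent choice in each cluster.

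Next I would incorporate condition (ii) of Theorem~\ref{thm:simplex}. A clique fails to be a simplex face exactly when it contains a consistently oriented selection around a cycle of $G$; as $\sigma$ is the only cycle, the sole obstructions are the two orientation sets, $p_{e_i,v_i}$ for all $i\in\Z/d\Z$ and $p_{e_i,v_{i+1}}$ for all $i\in\Z/d\Z$, along $\sigma$. These involve only the $d$ cycle-clusters, so the choices in the leaf edges and the non-cycle non-leaf edges remain entirely free. Hence the number of cliques satisfying (ii), counting the empty clique, factors as $6^{l}\cdot 5^{e-l-d}\cdot N_\sigma$, where $N_\sigma$ counts the admissible selections in the $d$ cycle-clusters that avoid both orientations. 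The structure of the cycle-clusters — five choices each and no removed inter-cluster edges — is identical to that of $P_{C_d}$, so the inclusion–exclusion computation of the previous proof applies verbatim: there are $2^{d}$ selections containing the first orientation, $2^{d}$ containing the second, and a single selection containing both, whence $N_\sigma=5^{d}-(2^{d+1}-1)=5^{d}-2^{d+1}+1$.

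Finally, subtracting the empty clique (which does not correspond to a face) yields the total number of simplex faces as $6^{l}\cdot 5^{e-l-d}\cdot(5^{d}-2^{d+1}+1)-1$, as claimed. I expect the step requiring the most care to be the factorization: one must argue cleanly that the absence of parallel edges removes all inter-cluster obstructions so that the count splits as a product over clusters, and that the uniqueness of the cycle confines condition (ii) to the $d$ cycle-clusters, so that the nontrivial inclusion–exclusion is localized there and can be imported directly from the cycle-graph case. As a consistency check, the formula specializes to the cycle-graph count $5^{n}-2^{n+1}$ upon setting $l=0$ and $e=d=n$.
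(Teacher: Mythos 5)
Your proof is correct and is exactly the ``similar argument'' the paper alludes to (the paper omits the proof, referring to the cycle-graph case): you count cliques of $\Gamma_G$ cluster-by-cluster using the independence of clusters granted by the absence of parallel edges, localize condition (ii) of Theorem~\ref{thm:simplex} to the unique cycle, and run the same inclusion--exclusion, getting $6$ choices per leaf edge, $5$ per other edge, and the factor $5^d-2^{d+1}+1$ for the cycle. The only caveat, shared with the paper's own statement, is the implicit assumption that each leaf edge contains exactly one leaf (so the leaf edges number $l$), which fails only for graphs with an isolated-edge component and holds automatically when $G$ is connected.
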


\bibliographystyle{amsplain} 
	\bibliography{biblio.bib}
\end{document}